\newtheorem{thm}{Theorem}[section]
\newtheorem{prop}[thm]{Proposition}
\newtheorem{lem}[thm]{Lemma}
\newtheorem{cor}[thm]{Corollary}
\newtheorem{claim}[thm]{Claim}
\theoremstyle{definition}
\newtheorem{definition}[thm]{Definition}
\theoremstyle{remark}
\numberwithin{equation}{section}
\newcommand{\bQ}{\mathbb{Q}}
\newcommand{\bR}{\mathbb{R}}
\newcommand{\bP}{\mathbb{P}}
\newcommand\OO{{\mathcal{O}}}
\newcommand{\mult}{\operatorname{mult}}
\newcommand{\Nklt}{\operatorname{Nklt}}
\newcommand{\Supp}{\operatorname{Supp}}
\newcommand{\lct}{\operatorname{lct}}
\newcommand{\Vol}{\operatorname{Vol}}
\newcommand{\length}{\operatorname{length}}
\begin{document}

\title{An effective upper bound for anti-canonical volumes of canonical $\mathbb{Q}$-Fano threefolds}
\date{\today}
\author{Chen Jiang}
\address{Chen Jiang, Shanghai Center for Mathematical Sciences, Fudan University, Jiangwan Campus, Shanghai, 200438, China}
\email{chenjiang@fudan.edu.cn}

\author{Yu Zou}
\address{Yu Zou, School of Mathematical Sciences, Fudan University, Shanghai, 200433, China}
\email{19110180016@fudan.edu.cn}


\begin{abstract}
We show that the anti-canonical volume of a canonical weak $\bQ$-Fano $3$-fold is at most $324$.
\end{abstract}

\keywords{weak $\bQ$-Fano threefolds, anti-canonical volumes, log canonical thresholds, boundedness}
\subjclass[2010]{14J45, 14J30, 14J17}
\maketitle
\pagestyle{myheadings} \markboth{\hfill C.~Jiang \& Y.~Zou
\hfill}{\hfill Effective bound for anti-canonical volumes of canonical $\bQ$-Fano $3$-folds\hfill}

\tableofcontents

\section{Introduction}
Throughout this paper, we work over the field of complex numbers $\mathbb{C}$.

A normal projective variety $X$ is called a {\it
weak $\bQ$-Fano variety} (resp. {\it $\bQ$-Fano variety}) if the anti-canonical divisor $-K_X$ is nef and big (resp. ample). 
A {\it canonical} (resp. {\it terminal}) weak $\bQ$-Fano variety is a weak $\bQ$-Fano variety with at worst canonical (resp. terminal) singularities. 

According to the minimal model program, weak $\bQ$-Fano varieties form a fundamental class among research objects of birational geometry. Motivated by the classification theory of $3$-dimensional algebraic varieties, we are interested in the study of explicit geometry of canonical or terminal (weak) $\bQ$-Fano $3$-folds. In this direction, there are a lot of works in the literature (for instance, \cite{S04, Prok05, BS07a, BS07b, Prok07, CC08, Prok10, Chen11, Prok13, PR16, CJ16, CJ20}). 

By \cite{Kaw92, KMMT00}, canonical weak $\bQ$-Fano $3$-folds form a bounded family (see \cite{Bir16b} for the much deeper higher dimensional generalization), so it is interesting to ask for effective or optimal bounds of different kinds of invariants of canonical weak $\bQ$-Fano $3$-folds. See \cite[Section~2.4]{CJ20} for a summary of known results.

Given a canonical weak $\bQ$-Fano $3$-fold $X$, we are interested in the {\it anti-canonical volume} $(-K_X)^3$ of $X$. 
This is an important invariant of $\bQ$-Fano $3$-folds and it plays a key role in the classification of smooth Fano $3$-folds (see \cite{IP99}). 
On the other hand, the classification of terminal/canonical weak $\bQ$-Fano $3$-folds is a wildly open problem and very few results are known (for instance \cite{Takagi02}).
So it is quite crucial to understand the behavior of anti-canonical volumes of terminal/canonical weak $\bQ$-Fano $3$-folds.

It was proved in \cite[Theorem~1.1]{CC08} that for a canonical weak $\bQ$-Fano $3$-fold $X$, $(-K_{X})^3\geq \frac{1}{330}$, and this lower bound is optimal. On the other hand, for the upper bound, there are only a few partial results:
\begin{enumerate}
\item {$(-K_{X})^3\leq 6^3\cdot (24!)^2$} if $X$ is a terminal weak $\mathbb{Q}$-Fano $3$-fold whose anti-canonical map is small (\cite[Theorem~1.2]{KMMT00});
\item {$(-K_{X})^3\leq 64$} if $X$ is a Gorenstein terminal $\mathbb{Q}$-Fano $3$-fold (\cite{Nami});
\item {$(-K_{X})^3\leq 72$} if $X$ is a Gorenstein canonical $\bQ$-Fano $3$-fold (\cite[Theorem~1.5]{Prok05});
\item {$(-K_{X})^3\leq \frac{125}{2}$} if $X$ is a non-Gorenstein $\mathbb{Q}$-factorial terminal $\bQ$-Fano $3$-fold with $\rho(X)=1$ (\cite[Theorem~1.2]{Prok07});
\item {$(-K_{X})^3\leq 72$} if $X$ is a $\mathbb{Q}$-factorial terminal weak $\bQ$-Fano $3$-fold with $\rho(X)=2$ except in one case with {$(-K_{X})^3\leq 81$} (\cite{Lai21}).
\end{enumerate}
Here (2)--(4) are optimal, but the methods 
essentially rely on the Gorenstein condition or the Picard rank condition, so they
could not be easily applied to an arbitrary terminal/canonical $\bQ$-Fano $3$-fold. For a terminal/canonical weak $\bQ$-Fano $3$-fold, it was conjectured by Prokhorov that the upper bound for the anti-canonical volume should also be $72$, but even an explicit upper bound is not established yet in the literature. 

As the main result of this paper, we provide an effective upper bound for the anti-canonical volume of a canonical weak $\bQ$-Fano $3$-fold.
\begin{thm}\label{mainthm}
Let $X$ be a canonical weak $\bQ$-Fano $3$-fold. Then $$(-K_{X})^3\leq 324.$$
\end{thm}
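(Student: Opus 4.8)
The plan is to bound $(-K_X)^3$ by producing, on the anti-canonical model, a divisor that is simultaneously very positive and whose log canonical threshold against $-K_X$ can be controlled from below. After passing to a $\bQ$-factorial terminalization we may assume $X$ has terminal singularities and $-K_X$ is nef and big; let $d=(-K_X)^3$ and assume for contradiction that $d$ is large. The starting point is the standard interplay between $h^0(-mK_X)$ and the volume: by Riemann--Roch together with the fact that $-K_X$ is nef and big, $h^0(-mK_X)$ grows like $\frac{d}{6}m^3$, so for a suitable explicit $m=m(d)$ one can find a divisor $D\in |-mK_X|$ (or in $|-mK_X|$ after a further multiple) passing through a general point with high multiplicity, hence with $\lct$ of the pair $(X, \frac{1}{m}D)$ small, of order $d^{-1/3}$.

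The core of the argument is then a Noether--Fano type inequality. First I would set up the non-klt locus: choose the largest $t$ so that $(X, tD)$ is lc but not klt near a general point $x$, producing a minimal non-klt center $Z$ through $x$ with $t\le c\,d^{-1/3}$ for an explicit constant $c$. Running tie-breaking and adjunction-type arguments (in dimension $3$ these are by now fairly standard, following Kawamata's subadjunction and the techniques behind effective base-point-freeness), one transfers positivity from $-K_X$ to $Z$: on $Z$ one obtains a $\bQ$-divisor of the shape $(-K_X - tD)|_Z$ which, because $-K_X$ is nef and big and $t$ is small, is still positive of controlled degree. Iterating this down through centers of dimension $2$, $1$, $0$ gives numerical inequalities relating $d$, $m$, and the various intersection numbers, which close up only if $d$ is bounded by an absolute constant.

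The main obstacle I expect is the dimension-$2$ step: when the minimal non-klt center $Z$ is a surface, one cannot simply invoke boundedness of the center, and the subadjunction formula $(K_X+tD)|_Z = K_Z + B_Z + M_Z$ carries a moduli part $M_Z$ whose positivity is subtle. Controlling $M_Z$ — ideally showing it is nef with bounded positivity, so that $(-(K_Z+B_Z))$ or a related divisor on $Z$ has volume comparable to a fixed multiple of $d$ — is where the delicate work lies, and where one must be careful to keep every constant explicit in order to land on $324$ rather than a merely theoretical bound. The remaining curve and point cases reduce to elementary estimates on a curve of bounded genus and on multiplicities at a point.

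Optimizing the constant then amounts to a bookkeeping exercise: one balances the choice of $m$, the coefficient $t$, and the slack in each adjunction step, and checks that the worst case among (center is a point) / (center is a curve) / (center is a surface) yields $(-K_X)^3\le 324$. If one of the intermediate bounds is too lossy, the fallback is to combine the volume estimate with the classification-type restrictions on small-volume Fano threefolds already available in the literature cited above, but the aim is to keep the argument self-contained and uniform.
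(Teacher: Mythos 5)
Your proposal is a genuinely different strategy from the paper's, but it contains gaps that you yourself flag and do not close, and it does not actually derive the constant $324$.

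\smallskip

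\textbf{What the paper actually does.} After taking a $\bQ$-factorial terminalization $W\to X$, the paper runs a $K$-MMP on $W$ to reach a Mori fiber space $Y\to T$ with $\Vol(-K_Y)\geq(-K_X)^3$ (Proposition~\ref{prop MFS}). This splits the problem into three cases with strong a priori structure: (a) $\rho(Y)=1$ $\bQ$-Fano, where the known bounds of Namikawa and Prokhorov give $\leq 64$; (b) a weak del Pezzo fibration $Y\to\bP^1$; and (c) a conic bundle $Y\to S$ over a rank-one Du Val del Pezzo surface. In cases (b) and (c) the connectedness lemma is applied to a pair built from $-K_Y$, an auxiliary big divisor $D\sim_\bQ -K_Y-sF$, and one or two fibers, to force a non-klt center on a \emph{general} fiber; inversion of adjunction then puts everything on $F$, which is a smooth weak del Pezzo surface (resp. $\bP^1$) with \emph{bounded, explicit} geometry. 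Effectivity comes entirely from Theorem~\ref{thm SLCT}, the explicit lct lower bound $t\geq 1/6$ on weak del Pezzo surfaces, combined with $K_F^2\leq 9$; that is exactly where $36\cdot 9=324$ comes from. No Kawamata subadjunction and no moduli part appear anywhere.

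\smallskip

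\textbf{Where your approach has gaps.} You propose the classical iterated non-klt-center / subadjunction route: produce a divisor of small lct through a general point from $h^0(-mK_X)\sim \frac{d}{6}m^3$, cut down the minimal non-klt center, subadjoin, and iterate. That strategy proves \emph{existence} of a bound, but you acknowledge the central obstruction: when the minimal center $Z$ is a surface, the subadjunction formula carries a moduli part $M_Z$ whose positivity is only b-nef with no explicit control, and $Z$ itself need not lie in a bounded family. You do not resolve this; you state it as ``where the delicate work lies.'' The paper sidesteps this entirely by going to a Mori fiber space first, so that the surface one eventually works on is a \emph{fiber}, hence automatically a smooth weak del Pezzo surface of degree $\leq 9$, and there is never a subadjunction with a moduli part to control. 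Your last paragraph asserts that ``optimizing the constant\ldots amounts to a bookkeeping exercise'' landing on $324$, but nothing in your outline determines that number, or any number. As written, you have a plausible research program, not a proof: the dimension-two step is genuinely open in your formulation, and the constant $324$ is specific to the del Pezzo/conic-bundle reduction and the explicit lct bound of Theorem~\ref{thm SLCT}, neither of which appears in your argument. If you want to salvage the effective constant, you will need either to import the Mori fiber space reduction (at which point you have reproduced the paper's proof) or to make the moduli part fully explicit, which is a substantial and separate piece of work.
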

The following is a direct corollary by Reid's Riemann--Roch formula. It can be viewed as a weak generalization of \cite[Corollary~1.8]{Prok05}.
\begin{cor}\label{maincor}
Let $X$ be a canonical weak $\bQ$-Fano $3$-fold. Then $$h^0(X, -K_{X})\leq 164.$$
\end{cor}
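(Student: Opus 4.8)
The plan is to deduce this from Theorem~\ref{mainthm} using Reid's orbifold Riemann--Roch formula. Passing to a $\bQ$-factorial terminalization $f\colon X'\to X$ changes neither the anti-canonical volume nor $h^0(-K_X)$ nor $\chi(\OO_X)$ (because $f$ is crepant), so I may assume $X$ is terminal. Since $X$ is klt and both $0=K_X+(-K_X)$ and $-K_X=K_X+(-2K_X)$ have the form $K_X+L$ with $L$ nef and big, Kawamata--Viehweg vanishing gives $h^i(X,\OO_X)=h^i(X,-K_X)=0$ for $i>0$; hence $\chi(\OO_X)=1$ and $h^0(X,-K_X)=\chi(X,-K_X)$.

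Applying Reid's Riemann--Roch formula to $D=-K_X$ on the terminal $3$-fold $X$, and using the companion expression for $\chi(\OO_X)$ to eliminate the $(-K_X)\cdot c_2(X)$ term, one obtains
\begin{equation*}
h^0(X,-K_X)=\chi(X,-K_X)=\frac12(-K_X)^3+3-\sum_{Q\in\mathcal B}\frac{b_Q(r_Q-b_Q)}{2r_Q},
\end{equation*}
where $\mathcal B=\{(r_Q,b_Q)\}$ is the basket of virtual cyclic quotient singularities of $X$, with $1\le b_Q<r_Q$ and $\gcd(b_Q,r_Q)=1$. Every summand is positive, so Theorem~\ref{mainthm} at once gives $h^0(X,-K_X)\le\tfrac12\cdot 324+3=165$.

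To improve $165$ to $164$ I would split into two cases. An elementary check shows $\frac{b_Q(r_Q-b_Q)}{2r_Q}\ge\tfrac14$ for every admissible pair $(r_Q,b_Q)$. Hence if $\mathcal B\ne\emptyset$ then $\sum_Q\frac{b_Q(r_Q-b_Q)}{2r_Q}\ge\tfrac14$, so $h^0(X,-K_X)\le 165-\tfrac14$ and, $h^0$ being an integer, $h^0(X,-K_X)\le 164$. If instead $\mathcal B=\emptyset$, then $X$ is Gorenstein, so its anti-canonical model is a Gorenstein canonical $\bQ$-Fano $3$-fold with the same anti-canonical volume; by \cite[Theorem~1.5]{Prok05} that volume is at most $72$, whence $h^0(X,-K_X)=\tfrac12(-K_X)^3+3\le 39$. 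In either case $h^0(X,-K_X)\le 164$.

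Everything above is routine granted Theorem~\ref{mainthm}; the only points demanding care are recording the precise form and sign of the basket correction in Reid's formula, checking the vanishing that turns $h^0(X,-K_X)$ into $\chi(X,-K_X)$, and observing --- for the borderline estimate --- that an empty basket forces the Gorenstein situation, where the much stronger bound $72$ is available.
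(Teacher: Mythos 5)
Your proof is correct and follows essentially the same approach as the paper: pass to a $\bQ$-factorial terminalization, apply Reid's Riemann--Roch, and split into the Gorenstein case (where \cite[Theorem~1.5]{Prok05} gives $(-K)^3\le 72$) and the non-Gorenstein case (where the basket contribution pushes $h^0$ strictly below $165$). The only cosmetic differences are that you make Kawamata--Viehweg vanishing explicit and you prove the quantitative bound $\frac{b(r-b)}{2r}\ge\frac14$ for each basket element, whereas the paper simply observes that the correction term is strictly negative when the basket is nonempty and invokes integrality of $h^0$ to drop from ``$<165$'' to ``$\le 164$''; both routes close the argument equally well.
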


The ideas of proof originate from \cite{Jiang14, Jiang18}, where the first author gave a general strategy on bounding anti-canonical volumes of $\bQ$-Fano varieties with prescribed singularities.
 The strategy works for canonical weak $\bQ$-Fano $3$-folds as well, but the issue is that the method in \cite{Jiang14, Jiang18} is not effective, that is, it only gives the existence of an upper bound rather than an explicit formula.
 So in order to give the desired upper bound, we have to provide a much more effective version of the method in \cite{Jiang14, Jiang18} for canonical weak $\bQ$-Fano $3$-folds.
 
 We briefly explain the sketch of the proof of 
 Theorem~\ref{mainthm}. 
 As the first step, by an MMP argument, we can construct a birational model $Y$ of $X$ with a fibration structure $Y\to S$, and reduce the problem to the boundedness of the anti-canonical volume of $Y$ (Proposition~\ref{prop MFS}). 
 As $X$ is canonical, the birational model $Y$ and the fibration $Y\to S$ have many geometric restrictions which lead to an effective upper bound for the anti-canonical volume. Then the problem splits into two cases: $\dim S=1$ or $\dim S=2$. When $\dim S=1$, by ideas in \cite{Jiang14, Jiang18}, to give an upper bound for the 
 anti-canonical volume of $Y$, it suffices to give a lower bound for certain log canonical thresholds of $F$, where $F$ is a general fiber of $Y\to S$. The proof uses the connectedness lemma to construct non-klt centers on $F$ (Proposition~\ref{prop dp fibration}), and the effective lower bound of log canonical thresholds is treated in Section~\ref{sec 3} (Theorem~\ref{thm SLCT}).
 When $\dim S=2$, a similar argument can be used to construct non-klt centers on $F$, but as $S$ is a surface, we need to consider a well-chosen base point free linear system on $S$ (Proposition~\ref{prop conic bdl}).

This paper is organized as the following. In Section~\ref{sec 2}, we introduce definitions and basic knowledge. In Section~\ref{sec 3}, we study the lower bound of certain log canonical thresholds on smooth weak del Pezzo surfaces (Theorem~\ref{thm SLCT}). In Section~\ref{sec 4}, we prove Theorem~\ref{mainthm}.

\section{Preliminaries}\label{sec 2}
We adopt standard notation and definitions in \cite{KM} and will freely use them.
We use $\mathbb{F}_n$ to denote the $n$-th Hirzebruch surface
$\mathbb{P}_{\mathbb{P}^1}(\mathcal{O}_{\mathbb{P}^1}\oplus\mathcal{O}_{
\mathbb{P}^1}(n))$.
We use $\sim_\bQ$ and $\sim_\bR$ to denote the $\bQ$-linear equivalence and $\bR$-linear equivalence respectively.

\subsection{Singularities of pairs}
\begin{definition}
A {\it pair} $(X, B)$ consists of a normal variety $X$ and an effective
$\bR$-divisor $B$ on $X$ such that
$K_X+B$ is $\bR$-Cartier.
\end{definition}

\begin{definition}\label{def sing}
Let $(X, B)$ be a pair. Let $f: Y\to X$ be a log
resolution of $(X, B)$, write
$$
K_Y =f^*(K_X+B)+\sum a_iE_i,
$$
where $E_i$ are distinct prime divisors on $Y$ satisfying $f_*(\sum a_iE_i)=-B$. The number $a_i$ is called the {\it discrepancy} of $E_i$ with respect to $(X, B)$, and is denoted by $a(E_i, X, B)$. 
The pair $(X,B)$ is called
\begin{itemize}
\item[(a)] \emph{Kawamata log terminal} ({\it klt},
for short) if $a_i>-1$ for all $i$;

\item[(b)] \emph{log canonical} (\emph{lc}, for
short) if $a_i\geq -1$ for all $i$;

\item[(c)] \emph{terminal} if $a_i> 0$ for all $f$-exceptional divisors $E_i$ and for all $f$;

\item[(d)] \emph{canonical} if $a_i\geq 0$ for all $f$-exceptional divisors $E_i$ and for all $f$.
\end{itemize}
Usually, we write $X$ instead of $(X,0)$ in the case when $B=0$.

Given a closed point $P\in X$, we say that $(X, B)$ is \emph{lc near $P$} (resp. \emph{klt near $P$}) if there exists an open neighborhood $U$ of $P$ such that $(U, B|_U)$ is lc (resp. klt). 
\end{definition}

The following lemma is a numerical criterion for local singularities.
\begin{lem}[{cf. \cite[Proposition~9.5.13]{Positivity2}}]\label{ls}
Let $(X, B)$ be a pair. Suppose that $P\in X$ is a smooth closed point such that $\mult_P B\leq 1$ (resp. $<1$). 
Then $(X, B)$ is lc near $P$ (resp. klt near $P$).
\end{lem}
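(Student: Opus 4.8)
The plan is to reduce to the standard comparison between log canonical thresholds and multiplicities at a smooth point. Since the assertion is local near $P$, I would first shrink $X$ so that it is smooth of some dimension $n$, and write $B=\sum_i b_iD_i$ with $b_i>0$, discarding the components not passing through $P$. Next I would dispose of the klt case by a scaling trick: if $\mult_PB<1$, choose $\varepsilon>0$ with $(1+\varepsilon)\mult_PB\le 1$; granting the lc case, $(X,(1+\varepsilon)B)$ is lc near $P$, and multiplying an effective boundary of an lc pair by a factor $<1$ produces a klt pair, so $(X,B)$ is klt near $P$. Thus everything reduces to showing: if $\mult_PB\le 1$ then $(X,B)$ is lc near $P$.

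For this, the cleanest route is to invoke the inequality $\lct_P(X,B)\ge 1/\mult_PB$, valid at a smooth point $P$, which is exactly (a form of) \cite[Proposition~9.5.13]{Positivity2}. Under the hypothesis $\mult_PB\le 1$ this gives $\lct_P(X,B)\ge 1$, and since $(X,B)$ is lc near $P$ if and only if $\lct_P(X,B)\ge 1$ (a finite set of conditions on a fixed log resolution, each affine-linear in $t$ and hence closed at $t=1$), we conclude. If one prefers a self-contained argument, I would instead induct on $n=\dim X$: the case $n=1$ is immediate because a pair $(X,B)$ on a smooth curve with $\coeff_PB\le 1$ is lc near $P$; for $n\ge 2$ one takes a general smooth hypersurface $H$ through $P$, uses that a general hyperplane section preserves multiplicities to get $\mult_P(B|_H)=\mult_PB\le 1$, applies the inductive hypothesis to conclude $(H,B|_H)$ is lc near $P$, and then uses inversion of adjunction to deduce that $(X,H+B)$---hence $(X,B)$---is lc near $P$.

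In either formulation, the only non-formal ingredient is the bound $\lct_P(X,B)\ge 1/\mult_PB$ (equivalently, inversion of adjunction in the lc category): this is where the actual content lies, and I would expect it to be the main obstacle, while the remaining steps are routine reductions and bookkeeping.
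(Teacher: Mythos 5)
Your main route coincides with the paper's ``proof'', which is nothing more than the bracketed citation of \cite[Proposition~9.5.13]{Positivity2}: the lemma is quoted as a known fact, and the inequality $\lct_P(X,B)\geq 1/\mult_P B$ at a smooth point is exactly what that reference provides. Your reduction of the klt case to the lc case by scaling up is logically fine, though it is worth noting that the Lazarsfeld statement, proved via multiplier ideals, most directly gives the klt case (strict inequality), after which the lc case follows by perturbing the coefficients downward and using that log canonicity is a closed condition; so the more standard bookkeeping runs in the opposite direction.

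One caveat about your self-contained alternative by induction on dimension: as written you apply inversion of adjunction in the \emph{log canonical} category to pass from $(H,B|_H)$ lc near $P$ to $(X,H+B)$ lc near $P$. In general lc inversion of adjunction is Kawakita's theorem, which is considerably deeper than the klt/plt version \cite[Theorem~5.50]{KM}. The clean way to make the induction elementary is to prove the klt statement first (where only \cite[Theorem~5.50]{KM} is needed, since a general smooth $H$ through $P$ makes $(X,H+B)$ plt iff $(H,B|_H)$ is klt) and then recover the lc statement from the klt one by the closedness argument just mentioned, rather than the other way around.
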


Here for an effective Cartier divisor $D$ on $X$, $\mult_P D$ denotes the multiplicity of the divisor $D$ at $P$, i.e. the vanishing order at $P$ of a local equation for $D$. It can be extended to effective $\bR$-Cartier $\bR$-divisors by homogeneous property.

\subsection{Non-klt centers}

\begin{definition}
In the setting of Definition~\ref{def sing}, $E_i$ is called a {\it non-klt place} of $(X, B)$ if $a_i\leq -1$.
A proper subvariety $V\subset X$ is called a {\it non-klt center} of $(X, B)$ if it is the image of a non-klt place. The {\it non-klt locus} $\text{Nklt}(X, B)$ of $(X, B)$ is the union of all non-klt centers of $(X, B)$.
\end{definition}

In practice, the following lemma is often used to construct non-klt centers.
\begin{lem}[{cf. \cite[Lemma~2.29]{KM}}]\label{lem mult>k nonklt}
Let $(X, B)$ be a pair and $W\subset X$ be a closed subvariety of codimension $k$ such that $W$ is not contained in the singular locus of $X$. If $\mult_W B\geq k$, then $W$ is a non-klt center of $(X, B)$.
\end{lem}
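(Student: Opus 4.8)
The plan is to exhibit a single explicit prime divisor $E$ over $X$ with center $W$ and discrepancy $a(E,X,B)\leq -1$; by the definition of a non-klt center, this is exactly what is required. Since the center of a divisorial valuation on $X$ and its discrepancy with respect to $(X,B)$ are unchanged if we replace $X$ by a dense open subset, and since $\mult_W B$ depends only on the generic point $\eta_W$ of $W$, I would first replace $X$ by $X\setminus\Sing X$. This is legitimate precisely because the hypothesis $W\not\subset\Sing X$ guarantees $\eta_W\in X\setminus\Sing X$, so $W$ (more precisely, $W\setminus\Sing X$) survives and $\mult_W B$ is unaffected. Shrinking once more, I may also assume $W$ itself is smooth. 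After this reduction both $X$ and $W$ are smooth and we still have $\mult_W B\geq k$.

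Now take $\pi\colon X'=\operatorname{Bl}_W X\to X$, the blowup of the smooth codimension-$k$ subvariety $W$, with exceptional divisor $E$. Two standard local computations finish the argument. First, for the blowup of a smooth center of codimension $k$ in a smooth variety one has $K_{X'}=\pi^*K_X+(k-1)E$, hence $a(E,X,0)=k-1$. Second, $\operatorname{coeff}_E(\pi^*B)=\mult_W B$; this one checks componentwise, reducing to a single prime divisor $B_j$ where it is the classical identity $\operatorname{ord}_E(\pi^*B_j)=\mult_W B_j$ between the vanishing order along $E$ and the $\mathfrak{m}_{\eta_W}$-adic multiplicity of a local equation of $B_j$. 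Subtracting $\pi^*B$ from the relation $K_{X'}=\pi^*(K_X+B)+a(E,X,B)E$ then gives
\[
a(E,X,B)=(k-1)-\mult_W B\leq (k-1)-k=-1.
\]
Thus $E$ is a non-klt place of $(X,B)$; its center on the shrunk $X$ is $W$, so on the original $X$ its center is $\overline{W}=W$, and therefore $W$ is a non-klt center of $(X,B)$.

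I do not anticipate a serious obstacle: the discrepancy formula for the blowup of a smooth center and the identity $\operatorname{coeff}_E(\pi^*B)=\mult_W B$ are classical and purely local. The only point I would state carefully is the opening reduction — that passing to $X\setminus\Sing X$, and then to an open subset on which $W$ is smooth, changes neither $\mult_W B$, nor the discrepancy $a(E,X,B)$, nor the closure of the center of $E$. This is exactly where the hypothesis $W\not\subset\Sing X$ enters, and it is precisely what makes the naive blowup computation applicable.
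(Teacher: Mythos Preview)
Your argument is correct and is precisely the standard proof: shrink to make $X$ and $W$ smooth, blow up $W$, and read off $a(E,X,B)=(k-1)-\mult_W B\leq -1$ from the two local identities $K_{X'}=\pi^*K_X+(k-1)E$ and $\coeff_E(\pi^*B)=\mult_W B$. The paper does not supply its own proof of this lemma at all; it simply records the statement with a reference to \cite[Lemma~2.29]{KM}, where the same blowup computation is carried out. So there is nothing to compare beyond noting that your write-up matches the argument in the cited source.
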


Here $\mult_W B$ is the multiplicity of $B$ at the generic point of $W$.

Recall the following special case of the Shokurov--Koll\'ar connectedness lemma.
\begin{lem}[{cf. \cite{Shokurov}, \cite[Theorem~17.4]{Kol92}}]\label{CL}Let $(X,B)$ be a pair with $X$ proper. If $-(K_X+B)$ is nef and big, then ${\rm Nklt} (X,B)$ is connected.
\end{lem}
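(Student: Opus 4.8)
The plan is to run the standard Kawamata--Viehweg vanishing argument on a log resolution. Take a log resolution $f\colon Y\to X$ of $(X,B)$ with $Y$ projective (possible by Chow's lemma together with resolution of singularities), and write $K_Y=f^*(K_X+B)+\sum_i a_iE_i$ as in Definition~\ref{def sing}, with $\sum_iE_i$ of simple normal crossings support. Put $N:=\sum_{a_i\le-1}E_i$, the reduced sum of the non-klt places of $(X,B)$ appearing on $Y$; since a log resolution exhibits every non-klt center, $\Nklt(X,B)=f(N)$, so it suffices to prove that $N$ is connected (if $N=\varnothing$ there is nothing to prove). Also put $\mathcal{E}:=\sum_{a_i>0}\lceil a_i\rceil E_i$; every such $E_i$ is $f$-exceptional, so $\mathcal{E}\ge 0$ is $f$-exceptional, $f_*\mathcal{O}_Y(\mathcal{E})=\mathcal{O}_X$, and hence $H^0(Y,\mathcal{O}_Y(\mathcal{E}))=H^0(X,\mathcal{O}_X)=\mathbb{C}$ because $X$ is proper and connected.

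Twisting the structure sequence of $N\subset Y$ by $\mathcal{O}_Y(\mathcal{E})$ gives
$$0\longrightarrow \mathcal{O}_Y(\mathcal{E}-N)\longrightarrow \mathcal{O}_Y(\mathcal{E})\longrightarrow \mathcal{O}_N(\mathcal{E}|_N)\longrightarrow 0,$$
and everything reduces to the vanishing $H^1(Y,\mathcal{O}_Y(\mathcal{E}-N))=0$: granting it, the one-dimensional space $H^0(Y,\mathcal{O}_Y(\mathcal{E}))$ surjects onto $H^0(N,\mathcal{O}_N(\mathcal{E}|_N))$, which therefore has dimension $\le 1$; since $\mathcal{E}|_N\ge 0$ we have $H^0(N,\mathcal{O}_N)\hookrightarrow H^0(N,\mathcal{O}_N(\mathcal{E}|_N))$, so $\dim_{\mathbb{C}}H^0(N,\mathcal{O}_N)\le 1$ and $N$ is connected, hence so is $\Nklt(X,B)=f(N)$. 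To establish the vanishing one computes
$$\mathcal{E}-N-K_Y \;=\; f^*\bigl(-(K_X+B)\bigr)+\Phi+\Psi,$$
where $\Phi:=\sum_{a_i>-1}(\lceil a_i\rceil-a_i)E_i$ has simple normal crossings support with all coefficients in $[0,1)$, where $\Psi:=\sum_{a_i\le-1}(-1-a_i)E_i\ge 0$, and where $f^*(-(K_X+B))$ is nef and big because $-(K_X+B)$ is nef and big and $f$ is birational. When every coefficient of $\Psi$ is $<1$ this is exactly the input of Kawamata--Viehweg vanishing (a nef and big $\mathbb{R}$-divisor plus a simple normal crossings boundary with coefficients $<1$), which yields the desired vanishing, and the lemma follows.

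The point that needs care---the main obstacle---is that a component of $B$ of coefficient $\ge 2$ forces a coefficient $\ge 1$ in $\Psi$, so the naive Kawamata--Viehweg input is not quite available; the same nuisance shows up because $B$ is only an $\mathbb{R}$-divisor, so $K_X+B$ is merely $\mathbb{R}$-Cartier. I would dispose of both at once by recasting the argument through multiplier ideals: $\mathcal{J}(X,B):=f_*\mathcal{O}_Y\bigl(\lceil\sum_i a_iE_i\rceil\bigr)$ is a coherent ideal sheaf whose cosupport is exactly $\Nklt(X,B)$, and Nadel vanishing for $\mathbb{R}$-divisors, applied with the nef and big class $-(K_X+B)$, gives $H^i(X,\mathcal{J}(X,B))=0$ for $i>0$. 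Then $\mathbb{C}=H^0(X,\mathcal{O}_X)$ surjects, as a ring, onto $H^0(X,\mathcal{O}_X/\mathcal{J}(X,B))$, so the latter is a quotient ring of $\mathbb{C}$, hence local (or zero); being the ring of global sections of the proper scheme supported on $\Nklt(X,B)$, this forces $\Nklt(X,B)$ to be connected (or empty). Alternatively one can stay on $Y$ and absorb $\lfloor\Psi\rfloor$ by invoking Kodaira's lemma on the big divisor $f^*(-(K_X+B))$, at the cost of a more delicate perturbation.
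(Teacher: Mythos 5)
The paper does not prove this lemma; it simply cites Shokurov and Koll\'ar \cite[Theorem~17.4]{Kol92}, so there is no ``paper's own proof'' to compare against. Your argument is the standard one behind those references, and it is essentially correct, but the ``main obstacle'' you flag near the end is not actually an obstacle, and the detour through Nadel vanishing is unnecessary.

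Writing $A=\sum_{a_i>-1}a_iE_i$ and $F=\sum_{a_i\le -1}(-a_i)E_i$ (so $F\ge 0$ with every coefficient $\ge 1$, and $\mathcal{E}=\lceil A\rceil$ in your notation, using $B\ge 0$), the standard move is to use the \emph{round-down} $\lfloor F\rfloor$ rather than the reduced divisor $N$ in the structure sequence. Since $\Supp\lfloor F\rfloor=\Supp N$, connectedness of $\lfloor F\rfloor$ is the same as connectedness of $N$, and the same one-dimensionality argument applies to $H^0(\lfloor F\rfloor,\mathcal{O}_{\lfloor F\rfloor})$ because $\lfloor F\rfloor$ is an effective Cartier divisor on the smooth $Y$, hence Cohen--Macaulay with no embedded primes, and $\mathcal{E}$ meets none of its components. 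With this replacement the computation becomes
\[
\mathcal{E}-\lfloor F\rfloor-K_Y \;=\; f^*\bigl(-(K_X+B)\bigr)+\{-A\}+\{F\},
\]
and both $\{-A\}$ and $\{F\}$ have simple normal crossings support with all coefficients in $[0,1)$ --- exactly the input of Kawamata--Viehweg vanishing on the projective $Y$, with no case distinction on the size of the $a_i$. So the first route you sketch already closes, once $N$ is replaced by $\lfloor F\rfloor$. Your alternative via multiplier ideals is also fine and is really the same computation pushed down to $X$: one should just note that since $X$ is only assumed proper, the vanishing $H^i(X,\mathcal{J}(X,B))=0$ is obtained by first applying relative Kawamata--Viehweg vanishing to kill $R^jf_*\mathcal{O}_Y(\lceil\sum a_iE_i\rceil)$ for $j>0$ (here $f^*(-(K_X+B))$ is $f$-numerically trivial, hence $f$-nef), identifying $H^i(X,\mathcal{J}(X,B))$ with $H^i(Y,\mathcal{O}_Y(\lceil\sum a_iE_i\rceil))$ via Leray, and then applying absolute vanishing on the projective $Y$; this is a mild gap in the write-up rather than in the idea.
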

\subsection{Log canonical thresholds}
\begin{definition}

Let $(X, B)$ be a pair which is lc near a closed point $P\in X$. Let $D\neq 0$ be an effective $\bR$-Cartier $\bR$-divisor on $X$ in a neighborhood of $P$. The {\it log canonical threshold} of $D$ with respect to $(X\ni P,B)$ is
$$\lct(X\ni P, B; D) = \sup\{t\in \bR \mid (X, B+ tD) \text{ is lc near } P\}.$$
If $B=0$, then we write $\lct(X\ni P; D)$ instead of $\lct(X\ni P, B; D)$. 
\end{definition}

It is well-known that log canonicity and log canonical thresholds satisfy convexity.
\begin{lem}[cf. {\cite[Lemma~3.8]{HLQ21}, \cite[Lemma~2.18]{HJL21}}]\label{Convexity for lct}
Let $P\in X$ be a closed point on a normal variety. Let $({X},{B_i})$ be a pair for $1\leq i\leq m$, $C\neq0$ an effective $\bR$-Cartier $\bR$-divisor on ${X}$ in a neighborhood of $P$, $\lambda_i$ a non-negative real number for $1\leq i\leq m$ such that $\sum_{i=1}^m\lambda_i= 1$. 
Suppose that $({X},{B_i})$ is lc near $P$ for all $1\leq i\leq m$. Then
\begin{enumerate}
 \item\label{lct convex 1} $({X},\sum_{i=1}^m \lambda_i {B_i})$ is lc near $P$;
 
 \item\label{lct convex ineq} the following inequality holds:
$$\lct\Big({X}\ni P,\sum_{i=1}^m \lambda_i {B_i};{C}\Big)\geq \sum_{i=1}^m \lambda_i \lct({X}\ni P,{B_i};{C});$$ 
\item\label{lct convex not klt} if $\lambda_{i_0}\neq 0$ and $({X},{B_{i_0}})$ is klt near $P$ for some $1\leq i_0\leq m$, then $({X},\sum_{i=1}^m \lambda_i {B_i})$ is klt near $P$.
\end{enumerate}

\end{lem}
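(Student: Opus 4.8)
The plan is to reduce everything to the elementary fact that, for a fixed prime divisor $E$ over $X$, the discrepancy $a(E,X,B)$ is an affine-linear function of the boundary $B$. First I would choose a common log resolution $f\colon Y\to X$ of the pair $(X,\,C+\sum_{i=1}^m B_i)$; then $f$ is simultaneously a log resolution of every $(X,B_i)$ and of every $(X,B_i+tC)$, and log canonicity (resp.\ klt-ness) near $P$ of any of these pairs can be tested by looking only at the discrepancies of the finitely many prime divisors $E_j$ on $Y$ whose image $f(E_j)$ contains $P$. Since $K_Y-f^*K_X$ does not depend on $B$ while $f^*B$ is linear in $B$, one gets $a(E_j,X,\sum_i\lambda_i B_i)=\sum_i\lambda_i\,a(E_j,X,B_i)$ whenever $\sum_i\lambda_i=1$.

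For part \eqref{lct convex 1}: as each $K_X+B_i$ is $\bR$-Cartier and $\sum\lambda_i=1$, the combination $K_X+\sum\lambda_i B_i=\sum\lambda_i(K_X+B_i)$ is $\bR$-Cartier and $\sum\lambda_i B_i\ge 0$, so $(X,\sum\lambda_i B_i)$ is a pair. For every $E_j$ on $Y$ with $f(E_j)\ni P$, the lc hypotheses give $a(E_j,X,B_i)\ge -1$ for all $i$, hence $a(E_j,X,\sum\lambda_i B_i)=\sum\lambda_i a(E_j,X,B_i)\ge -\sum\lambda_i=-1$, which proves lc near $P$. Part \eqref{lct convex not klt} is the same computation, inserting the strict inequality $a(E_j,X,B_{i_0})>-1$ (valid since $(X,B_{i_0})$ is klt near $P$) into the term with $\lambda_{i_0}>0$; this forces $a(E_j,X,\sum\lambda_i B_i)>-1$ for all relevant $E_j$, hence klt near $P$.

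For part \eqref{lct convex ineq}: set $t_i=\lct(X\ni P,B_i;C)$. I would first record that this supremum is attained, i.e.\ $(X,B_i+t_iC)$ is lc near $P$: on the fixed resolution $f$, the condition ``$(X,B_i+tC)$ lc near $P$'' amounts to $t\le (a(E_j,X,B_i)+1)/\operatorname{ord}_{E_j}f^*C$ for the finitely many $E_j$ with $f(E_j)\ni P$ and $\operatorname{ord}_{E_j}f^*C>0$, so the set of admissible $t$ is the closed interval $[0,t_i]$. Then, writing $\sum\lambda_i B_i+\big(\sum\lambda_i t_i\big)C=\sum_i\lambda_i\,(B_i+t_iC)$ as a convex combination (weights summing to $1$) of pairs that are lc near $P$, part \eqref{lct convex 1} shows $\big(X,\sum\lambda_i B_i+(\sum\lambda_i t_i)C\big)$ is lc near $P$, and by definition of the log canonical threshold this gives $\lct\big(X\ni P,\sum\lambda_i B_i;C\big)\ge\sum\lambda_i t_i$.

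This lemma is elementary and I do not expect a genuine obstacle; the only points needing care are checking that lc/klt-ness near $P$ and the exact value of the lct can all be read off a single common log resolution (so that affine-linearity of discrepancies applies uniformly, including the fact that only finitely many divisors, namely those on $Y$, matter), together with the bookkeeping ensuring that $K_X+\sum\lambda_i B_i$ and $K_X+B_i+t_iC$ are bona fide $\bR$-Cartier boundaries.
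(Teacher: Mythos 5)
The paper does not supply its own proof here—it cites \cite[Lemma~3.8]{HLQ21} and \cite[Lemma~2.18]{HJL21} directly. Your argument is correct and is the standard one underlying those references: fix a common log resolution of $(X, C+\sum_i B_i)$, observe that for a divisor $E$ on it the discrepancy $a(E,X,B)$ is affine-linear in $B$ (so convex combinations with weights summing to $1$ preserve $a(E,\cdot)\ge -1$ and the strict inequality), note that the lct along the finitely many relevant divisors is attained so that each $(X,B_i+t_iC)$ is lc near $P$, and then read off \eqref{lct convex ineq} by applying \eqref{lct convex 1} to the convex combination $\sum_i\lambda_i(B_i+t_iC)$.
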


\subsection{Volumes}
\begin{definition}
Let $X$ be an $n$-dimensional projective variety and $D$ be a Cartier divisor on $X$. The {\it volume} of $D$ is the real number
$$
\Vol(X, D)=\limsup_{m\to \infty}\frac{h^0(X, \OO_X(mD))}{m^n/n!}.
$$
For more details and properties of volumes, we refer to \cite[2.2.C]{Positivity1} and \cite[11.4.A]{Positivity2}. Moreover, by homogeneous property of volumes, the definition can be extended to $\bQ$-Cartier $\bQ$-divisors. Note that if $D$ is a nef $\bQ$-Cartier $\bQ$-divisor, then $\Vol(X, D)=D^n$. 
\end{definition}

\subsection{Intersection multiplicities on smooth surfaces}

Let $S$ be a smooth surface and $P\in S$ a closed point.
Assume that $C$ and $D$ are curves on $S$ having no common irreducible component, then we can define the {\it intersection multiplicity} $$(C\cdot D)_P=\length( \mathcal{O}_{P, S}/(f, g))$$
where $f, g$ are local equations of $C, D$ at $P$ (\cite[Exercise~I.5.4]{H}).
By homogeneous property, the definition of {\it intersection multiplicity} $(C\cdot D)_P$ can be extended for effective $\bR$-divisors
 $C$ and $D$ on $S$ having no common irreducible component.
 Under this setting,
it is well-known that 
 $(C\cdot D)_P\geq \mult_P C\cdot \mult_P D$ by \cite[Exercise~I.5.4]{H}.
Moreover, if $S$ is projective, then $(C\cdot D)_P\leq (C\cdot D)$ by \cite[Proposition~V.1.4]{H}.

\section{Log canonical thresholds on weak del Pezzo surfaces}\label{sec 3}
In this section, we investigate lower bounds of log canonical thresholds on weak del Pezzo surfaces. First, we prove 2 useful lemmas on (local) log canonical thresholds, of which we got the ideas during the preparation of \cite{HJL21} (cf. \cite[Theorem~1.11]{HJL21}).

\begin{lem}\label{lem lct m=I=1}
Let $S$ be a smooth surface and $P\in S$ a closed point. Let $B, C$ be effective $\mathbb{R}$-divisors on $S$ having no common irreducible component. Suppose that $\mult_P B= (B\cdot C)_P=1$. Then $(S, B+C)$ is lc near $P$.
\end{lem}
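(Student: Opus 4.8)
The plan is to run a standard induction on a log resolution, where the key point is to keep careful track of how both the multiplicity at $P$ and the local intersection number with $C$ behave under blowing up. First I would reduce to the case where $B$ and $C$ share no irreducible component through $P$ with a branch tangent to a common line — more precisely, I would work on the blow-up $\sigma\colon S'\to S$ at $P$ with exceptional curve $E$. Writing $m=\mult_P B=1$ and $e=(B\cdot C)_P=1$, let $\tilde B,\tilde C$ denote the strict transforms; then $\sigma^*B=\tilde B+m E$ and $\sigma^*C=\tilde C+(\mult_P C)E$, and
$$
K_{S'}+\tilde B+\tilde C = \sigma^*(K_S+B+C)+\bigl(1-m-\mult_P C\bigr)E.
$$
Since $m=1$, the coefficient of $E$ is $1-\mult_P C\le 1$, so $E$ appears with coefficient $\le 1$ and $(S',\tilde B+\tilde C+cE)$ with $c=1-\mult_P C$ is the pair we must show is lc along $E$.

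Next I would analyze the possibilities at a point $Q\in E$. Because $\mult_P B=1$, the strict transform $\tilde B$ meets $E$ in a single reduced point $Q_0$ (or is disjoint from $E$, in which case we are immediately done by Lemma~\ref{ls} applied to $\tilde B+cE$ since $\mult_{Q}(\tilde B+cE)\le 1$ at every point of $E$). At $Q_0$ we have $\mult_{Q_0}\tilde B\le \mult_P B=1$; moreover $(E\cdot\tilde B)_{Q_0}=1$ because $\tilde B$ is smooth there and meets $E$ transversally (the single point $Q_0$ accounts for the full intersection number $E\cdot\tilde B=\mult_P B=1$). For the pair $(S',\tilde B+cE)$ near $Q_0$, the two "curves" playing the roles of $B$ and $C$ are $\tilde B$ and $cE$; we have $\mult_{Q_0}\tilde B=1$ and $(\tilde B\cdot E)_{Q_0}=1$, but now $E$ enters with coefficient $c=1-\mult_P C<1$, so in fact $\mult_{Q_0}(\tilde B+cE)=1+c-1=c<1$ — wait, that is wrong; $\mult_{Q_0}(\tilde B+cE)=\mult_{Q_0}\tilde B+c\cdot\mult_{Q_0}E=1+c$. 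The point is that the hypothesis $\mult_P B=1$ forces $\tilde B$ to be \emph{smooth} at $Q_0$, and a smooth curve through a point contributes discrepancy $0$, so one more blow-up at $Q_0$ strictly decreases the relevant invariant. This suggests the right induction: I would induct on $(B\cdot C)_P$, observing that if $\mult_P C=0$ then $C$ does not pass through $P$ and we conclude by Lemma~\ref{ls} ($\mult_P B=1$), while if $\mult_P C\ge 1$ then on the blow-up the coefficient of $E$ is $\le 0$ so $E$ causes no trouble, $\tilde B$ is smooth at its unique intersection point $Q_0$ with $E$, and $(E\cdot\tilde C)_{Q_0}\le (B\cdot C)_P-\mult_P B\cdot\mult_P C\le (B\cdot C)_P-1$, strictly dropping the intersection number unless $Q_0\notin\tilde C$, in which case we again finish directly.

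The main obstacle I anticipate is the bookkeeping when $\tilde B$, $E$, and $\tilde C$ all pass through the same point $Q_0$ after blowing up: there one has a pair of the shape (smooth curve $\tilde B$) $+$ ($E$ with small coefficient) $+$ ($\tilde C$ with small coefficient), and I must verify this is lc, which should follow because a smooth curve plus boundary coefficients summing appropriately stays lc — concretely, I would absorb $cE$ and the part of $\tilde C$ into a single auxiliary divisor and invoke the inductive hypothesis together with the convexity statement Lemma~\ref{Convexity for lct}\eqref{lct convex 1}, or argue directly that $\mult_{Q_0}$ of the total transform (minus the smooth part $\tilde B$) is $\le 1$ and apply Lemma~\ref{ls}. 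Termination of the induction is guaranteed since each blow-up either terminates the argument or strictly decreases $(B\cdot C)_P\in\mathbb{Z}_{\ge 0}$ while preserving $\mult_P B\le 1$; the case $\mult_P B<1$ is handled at the outset by Lemma~\ref{ls}.
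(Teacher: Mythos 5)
Your approach (induction via ordinary blow-ups) differs from the paper's, which instead reduces to $\mathbb{Q}$-divisors and applies the weighted-multiplicity criterion of \cite[Theorem~6.40]{KSC04} together with the weighted B\'ezout-type inequality \cite[Lemma~6.47]{KSC04} in a single step, handling all tangent directions at once. Unfortunately, as written, the inductive argument has several genuine gaps.

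First, the claim that $\tilde B$ meets $E$ in a single reduced point $Q_0$ and is smooth there silently assumes $B$ is an irreducible reduced curve. For an arbitrary effective $\mathbb{R}$-divisor with $\mult_P B = 1$ this fails: take $B = \tfrac{1}{2}B_1 + \tfrac{1}{2}B_2$ with $B_1,B_2$ smooth curves through $P$ with distinct tangents; then $\tilde B$ meets $E$ at two points, at each of which $\mult \tilde B = \tfrac{1}{2}$, so you cannot carry forward the invariant $\mult_Q\tilde B = 1$. Second, the termination argument is incorrect: $(B\cdot C)_P$ is a positive real (or at best rational) number for $\mathbb{R}$-divisors, not an element of $\mathbb{Z}_{\ge 0}$, and moreover the quantity you would need to control in the inductive step is $(\tilde B\cdot(\tilde C + (\mult_P C)E))_{Q}$, which can remain equal to $1$ after a blow-up (precisely when $\tilde B\cap\tilde C$ is concentrated at $Q$ and $\tilde B$ meets $E$ transversally), so there is no strict decrease. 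Third, at a point $Q_0$ where $\tilde B$, $\tilde C$, and $E$ all pass through, the total multiplicity $\mult_{Q_0}(\tilde B + \tilde C + (\mult_P C)E)$ can easily exceed $1$, so the fallback of invoking Lemma~\ref{ls} does not apply there; absorbing $cE$ into $\tilde C$ and re-invoking the inductive hypothesis is exactly the crux, and it is not justified. Finally, a small arithmetic slip: with $m = 1$ your displayed equation gives discrepancy $-\mult_P C$ for $E$, so the coefficient of $E$ in the log pullback is $\mult_P C$, not $1-\mult_P C$ (this is $\le 1$ because $(B\cdot C)_P \ge \mult_P B\cdot\mult_P C$, which is the correct reason $E$ is harmless). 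The paper's weighted-blow-up argument packages all of this bookkeeping into a single inequality and avoids both the irreducibility assumption and the termination problem.
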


Here in Lemma~\ref{lem lct m=I=1} we do not assume that $\Supp C$ is smooth as in \cite[Theorem~1.11]{HJL21}, so these 2 results do not contain each other.
Also, it appears to us that the assumptions in Lemma~\ref{lem lct m=I=1} are quite simple, so the statement might be known to experts, 
but we could not find any similar statement in the literature (cf. \cite[Corollary~6.46]{KSC04}).

\begin{proof}
As being lc is a closed condition on coefficients, after slightly perturbing the coefficients of $B$ and $C$, we may assume that $B$ and $C$ are $\mathbb{Q}$-divisors.
Take a positive integer $k$ such that $kB$ and $kC$ are Cartier divisors.
Fix a local coordinate system $(x, y)$ for $S$ at $P$, and suppose that $(f_B= 0)$ and $(f_C=0)$ are local equations of $kB$ and $kC$ in the coordinates $(x, y)$ respectively for some $f_B, f_C\in \mathbb{C}[x, y]$.
Fix positive integers $w(x)$ and $w(y)$ as weights of $x$ and $y$ respectively, recall that for $f\in \mathbb{C}[x, y]$, $\text{mult}_w(f)$ is the weight of the lowest weight term of $f$.

By \cite[Theorem~6.40]{KSC04}, to show that $(S, B+C)$ is lc near $P$, it suffices to show that
\begin{align}
\frac{w(x)+w(y)}{\text{mult}_w(f_B)+\text{mult}_w(f_C)}\geq \frac{1}{k}.\label{w+w/m+m}
\end{align}
Without loss of generality, we may assume that $w(x)\leq w(y)$.
Note that
$\mult_P B=1$ implies that there is a monomial $x^iy^j$ with non-zero coefficient in $f_B$ such that $i+j=k$.
So $$\mult_w (f_B)\leq i\cdot w(x)+j\cdot w(y)\leq k\cdot w(y).$$
On the other hand, by the definition of $\mult_w (f_B)$, there is a monomial $x^{i'}y^{j'}$ with non-zero coefficient in $f_B$ such that $$\mult_w (f_B)= i'\cdot w(x)+j'\cdot w(y)\geq k\cdot w(x),$$ where we used the fact that $i'+j'\geq k$ as
$\mult_P B=1$.
So in summary, 
\begin{align}
k\cdot w(x)\leq \mult_w (f_B)\leq k\cdot w(y).\label{wx<wf<wy}
\end{align}
Moreover, by \cite[Lemma~6.47]{KSC04}, 
\begin{align}
\frac{\text{mult}_w(f_B)\cdot\text{mult}_w(f_C)}{w(x)\cdot w(y)}\leq (kB\cdot kC)_P=k^2.\label{mm/ww}
\end{align}
By \eqref{wx<wf<wy} and \eqref{mm/ww},
\begin{align*}
{}& k\cdot w(x)+k\cdot w(y)-\text{mult}_w(f_B)-\text{mult}_w(f_C)\\
\geq {}& k\cdot w(x)+k\cdot w(y)-\text{mult}_w(f_B)- \frac{k^2\cdot w(x)\cdot w(y)}{\text{mult}_w(f_B)}\\
= {}& \frac{(\text{mult}_w(f_B)-k\cdot w(x))(k\cdot w(y)-\text{mult}_w(f_B))}{\text{mult}_w(f_B)}\geq 0.
\end{align*}
Hence \eqref{w+w/m+m} is proved.
\end{proof}
The following is a direct consequence of Lemma~\ref{lem lct m=I=1}.
\begin{lem}\label{lct lower bound}
Let $S$ be a smooth surface and $P\in S$ a closed point. Let $B, C$ be effective $\mathbb{R}$-divisors on $S$ having no common irreducible component. Denote $m_B=\mult_P B$, $I=(B\cdot C)_P$. Suppose that $m_B>0$ and $I>0$. Then $$\Big(S, \frac{1}{m_B}B+\frac{m_B}{I} C\Big)$$ is lc near $P$. 
\end{lem}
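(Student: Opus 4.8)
The plan is to deduce Lemma~\ref{lct lower bound} directly from Lemma~\ref{lem lct m=I=1} by a scaling argument. The point is that the hypotheses of Lemma~\ref{lem lct m=I=1} are normalized so that $\mult_P B=1$ and $(B\cdot C)_P=1$, and we want to rescale the given $B$ and $C$ to land in that normalized situation. So first I would set $B'=\frac{1}{m_B}B$ and observe $\mult_P B'=\frac{1}{m_B}\mult_P B=1$. Then I would choose the coefficient of $C$ so that the intersection number with $B'$ becomes $1$: since intersection multiplicity is bilinear, $(B'\cdot C)_P=\frac{1}{m_B}(B\cdot C)_P=\frac{I}{m_B}$, so setting $C'=\frac{m_B}{I}C$ gives $(B'\cdot C')_P=\frac{m_B}{I}\cdot\frac{I}{m_B}=1$.

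With $B'$ and $C'$ so defined, $B'$ and $C'$ still have no common irreducible component (scaling by positive reals does not change supports), and $\mult_P B'=(B'\cdot C')_P=1$, so Lemma~\ref{lem lct m=I=1} applies and $(S,B'+C')=\bigl(S,\frac{1}{m_B}B+\frac{m_B}{I}C\bigr)$ is lc near $P$, which is exactly the assertion. I would write this out in three or four lines.

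One small technical point I should be careful about: if $m_B$ or $I$ happens not to be a nonnegative real in the right range, or if $B'$ (resp.\ $C'$) is the zero divisor, the statement of Lemma~\ref{lem lct m=I=1} still needs checking. The hypotheses $m_B>0$ and $I>0$ guarantee the scalars $\frac{1}{m_B}$ and $\frac{m_B}{I}$ are well-defined positive reals, and $C\neq 0$ (which is implicit in $I=(B\cdot C)_P>0$, forcing $C$ to have a component through $P$), so $C'\neq 0$ and similarly $B'\neq0$; thus we are genuinely in the setting of Lemma~\ref{lem lct m=I=1}. There is essentially no obstacle here — the lemma is a pure bookkeeping consequence of the homogeneity of $\mult_P$ and of $(\,\cdot\,)_P$ in each argument — so the only thing to get right is the arithmetic of the two scaling factors.

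\begin{proof}
Set $B'=\frac{1}{m_B}B$ and $C'=\frac{m_B}{I}C$. Since $m_B>0$ and $I>0$, these are effective $\bR$-divisors, and they have no common irreducible component because $B$ and $C$ do not. By homogeneity of the multiplicity, $\mult_P B'=\frac{1}{m_B}\mult_P B=1$. By bilinearity of the intersection multiplicity,
$$(B'\cdot C')_P=\frac{1}{m_B}\cdot\frac{m_B}{I}\,(B\cdot C)_P=\frac{1}{I}\cdot I=1.$$
Hence $B'$ and $C'$ satisfy the hypotheses of Lemma~\ref{lem lct m=I=1}, so $(S,B'+C')$ is lc near $P$. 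As $B'+C'=\frac{1}{m_B}B+\frac{m_B}{I}C$, this is the desired conclusion.
\end{proof}
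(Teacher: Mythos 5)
Your proof is correct and is exactly the scaling argument the paper has in mind: the paper states Lemma~\ref{lct lower bound} as a ``direct consequence'' of Lemma~\ref{lem lct m=I=1} without writing out the rescaling, and your normalization $B'=\frac{1}{m_B}B$, $C'=\frac{m_B}{I}C$ together with the homogeneity of $\mult_P$ and bilinearity of $(\cdot)_P$ is precisely how that consequence is obtained.
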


The following is the main theorem of this section.
\begin{thm}\label{thm SLCT}
Let $S$ be a smooth weak del Pezzo surface.
Suppose that there exists 
a real number $0<t<1$ and effective $\bR$-divisors $B, D$ on $S$ such that
\begin{enumerate}
\item $(S, B)$ is canonical;
 \item $(S, (1-t)B+tD)$ is not klt;
 \item $B\sim_{\bR} D\sim_{\bR}-K_S$;
 \item no irreducible component of $\Supp B$ has negative self-intersection;
 \item $D$ and $B$ have no common irreducible component.
\end{enumerate}
Then $t\geq \frac{1}{6}$.
Furthermore, if there is a birational morphism $S\to \mathbb{F}_0$, then $t\geq \frac{1}{5}$.
\end{thm}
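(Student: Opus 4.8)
The plan is to pick a minimal non-klt center $Z$ of $(S,(1-t)B+tD)$ and bound $t$ from below by an intersection-theoretic analysis, distinguishing the cases $\dim Z=1$ and $\dim Z=0$. As a preliminary remark, $(S,B)$ is canonical, hence lc, while $(S,0)$ is klt, so by convexity (Lemma~\ref{Convexity for lct}(\ref{lct convex not klt})) the pair $(S,(1-t)B)$ is klt; therefore the non-klt locus of $(S,(1-t)B+tD)$, which is non-empty by hypothesis (2), is contained in $\Supp D$, and in particular $Z\subset\Supp D$.

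\emph{Curve case.} Suppose $Z=\Gamma$ is an irreducible curve. If $\Gamma\subset\Supp B$, then, as $B$ and $D$ have no common component (hypothesis (5)), $\mult_\Gamma\big((1-t)B+tD\big)=(1-t)\mult_\Gamma B\le 1-t<1$, impossible for a non-klt center; hence $\Gamma\not\subset\Supp B$, so near the generic point of $\Gamma$ only $D$ contributes and $t\cdot\mult_\Gamma D\ge1$. Intersecting with the nef divisor $-K_S$ gives $\mult_\Gamma D\cdot(-K_S\cdot\Gamma)\le D\cdot(-K_S)=(-K_S)^2\le9$. If $-K_S\cdot\Gamma\ge2$ this already yields $t\ge\tfrac29$. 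Otherwise $-K_S\cdot\Gamma\in\{0,1\}$, and by adjunction and the Hodge index theorem $\Gamma$ is a $(-2)$-curve or a $(-1)$-curve (the remaining possibility $\Gamma\equiv-K_S$ on a degree-$1$ surface forces $\mult_\Gamma D\le1$, hence $t\ge1$, a contradiction). For a $(-2)$-curve, hypothesis (4) gives $\Gamma\not\subset\Supp B$ for free; writing $D=c\Gamma+\bar D$ with $\bar D\ge0$ not containing $\Gamma$, so $\bar D\sim_{\bR}-K_S-c\Gamma$, and intersecting $\bar D$ with a suitable ample divisor bounds $c=\mult_\Gamma D\le6$, whence $t\ge\tfrac16$; an easier variant of the same argument disposes of the $(-1)$-curve subcase. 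In all of these, when $S$ admits a birational morphism to $\mathbb{F}_0$ the relevant constants improve so as to give $t\ge\tfrac15$.

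\emph{Point case.} Suppose $Z=P$ is a closed point, and choose a prime divisor $E$ over $S$ with center $P$ and $a(E,S,(1-t)B+tD)\le-1$. Since $(S,B)$ is canonical, $\operatorname{ord}_E(B)\le a(E,S)$, and $a(E,S)\ge1$ because $E$ lies over a smooth point; substituting into $a(E,S)-(1-t)\operatorname{ord}_E(B)-t\operatorname{ord}_E(D)\le-1$ gives
\[
\operatorname{ord}_E(D)\ \ge\ a(E,S)+\tfrac1t .
\]
The crux is the reverse bound for $\operatorname{ord}_E(D)$, i.e.\ control of the multiplicity of $D$ accumulated along the chain of infinitely near points defining $E$, in terms of $D\sim_{\bR}-K_S$ and the canonicity of $(S,B)$. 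To make this effective I would fix an irreducible curve $C$ through $P$, smooth at $P$, sharing no component with $B$ or $D$, with $-K_S\cdot C$ as small as possible (e.g.\ the pullback of a line when $S$ dominates $\mathbb{P}^2$, a ruling when $S$ dominates $\mathbb{F}_0$); by Lemma~\ref{lct lower bound} the pair $\big(S,\tfrac{1}{m_B}B+\tfrac{m_B}{(B\cdot C)_P}C\big)$ is lc near $P$, where $m_B=\mult_P B\le1$ and $(B\cdot C)_P\le B\cdot C=-K_S\cdot C$, while $(D\cdot C)_P\le-K_S\cdot C$ limits how much of $D$ can concentrate along $C$ through $P$. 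Combining these with the displayed inequality and a short case analysis on $(-K_S)^2\in\{1,\dots,9\}$ should give $t\ge\tfrac16$, and, when in addition every such admissible $C$ satisfies $-K_S\cdot C\ge2$ — which is the case exactly when $S$ admits a birational morphism to $\mathbb{F}_0$ — improves the bound to $t\ge\tfrac15$. The subcase $m_B=0$ (that is, $P\notin\Supp B$) is easier and is handled directly via Lemma~\ref{ls} applied to $(S,tD)$.

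The main obstacle is the point case, specifically producing a \emph{uniform} upper bound for $\operatorname{ord}_E(D)$ — controlling how the multiplicity of $D$ propagates through arbitrarily long chains of infinitely near points on an arbitrary weak del Pezzo surface — and then arranging the degree-by-degree bookkeeping so that the resulting bound on $t$ comes out to exactly $\tfrac16$ (respectively $\tfrac15$), and not something weaker. This is where the geometry of $|-K_S|$, hypothesis (4), and Lemmas~\ref{lem lct m=I=1}--\ref{lct lower bound} all have to be used together.
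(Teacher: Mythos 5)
Your proposal does not close the proof, and the principal obstacle you name at the end is in fact where the real work lies; the paper's argument solves it by a structural reduction and a convexity computation that your plan does not contain.

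The most consequential omission is the first step of the paper's proof: using hypothesis (4) to push $B,D,t$ forward along any birational morphism $\sigma\colon S\to T$ of smooth weak del Pezzo surfaces (by the negativity lemma, $K_S+B=\sigma^*(K_T+\sigma_*B)$ and likewise for $(1-t)B+tD$; the self-intersection hypothesis guarantees that no component of $B$ is $\sigma$-exceptional, so $(T,\sigma_*B)$ stays canonical and $(T,(1-t)\sigma_*B+t\sigma_*D)$ stays non-klt). This reduces at once to $S\in\{\mathbb{P}^2,\mathbb{F}_0,\mathbb{F}_2\}$, which is exactly what turns your ``arbitrarily long chains of infinitely near points on an arbitrary weak del Pezzo surface'' into a finite, computable problem. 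You never invoke this, and without it the bookkeeping you describe cannot be carried out uniformly. On the reduced surfaces the paper then does \emph{not} try to bound $\operatorname{ord}_E(D)$ as you propose; instead it fixes $P\in\Nklt$, sets $I=(B\cdot D)_P$ and $m_B=\mult_P B\leq1$, and combines Lemma~\ref{lct lower bound} with the convexity Lemma~\ref{Convexity for lct}\eqref{lct convex ineq} to obtain the single inequality
\begin{align*}
t \;\geq\; (1-t)\frac{m_B^2}{I}+(1-(1-t)m_B)\,\lct(S\ni P;D),
\end{align*}
and then bounds $I$ by $(-K_S)^2$ and $\lct(S\ni P;D)$ by the global $\lct$ of the surface (Cheltsov, Cheltsov--Shramov). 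This is a different mechanism from your discrepancy-chasing, and it is what makes the estimate effective.

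Two further local problems. First, your $m_B=0$ subcase cannot be ``handled directly via Lemma~\ref{ls}'': that lemma gives sufficient conditions for lc/klt in terms of $\mult_P$, but here you need a \emph{lower} bound on $t$ from the hypothesis that $(S,tD)$ is non-klt near a smooth $P$, and this genuinely requires the global $\lct$ bounds $\lct(\mathbb{P}^2)\geq\tfrac13$, $\lct(\mathbb{F}_0)\geq\tfrac13$, $\lct(\mathbb{F}_2)\geq\tfrac14$ from the cited literature, not a local multiplicity criterion. Second, your curve-case estimate ``$c=\mult_\Gamma D\leq6$'' for a $(-2)$-curve $\Gamma$ is asserted without a correct derivation: the bound you get depends on how $\bar D\sim_{\mathbb{R}}-K_S-c\Gamma$ can be tested against nef divisors, and on $\mathbb{F}_2$ for instance testing against a ruling forces $c\leq2$, not $6$ — but on an arbitrary weak del Pezzo the argument is not uniform without first reducing to a minimal model, which again brings you back to the missing reduction step. (In the paper's treatment the curve case never needs to be isolated: a point on the curve works just as well in the local $\lct$ inequality.) Since you yourself flag the point case as unresolved, and the curve case and $m_B=0$ subcase are also not correctly handled, the proposal as written does not prove the theorem.
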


\begin{proof}
If $\sigma: S\to T$ is a birational morphism to another smooth weak del Pezzo surface $T$, then by the negativity lemma \cite[Lemma~3.39]{KM},
\begin{align*}
K_S+B={}&\sigma^*(K_T+\sigma_*B), \\
K_S+(1-t)B+tD={}&\sigma^*(K_T+(1-t)\sigma_*B+t\sigma_*D).
\end{align*}
The assumption on $\Supp B$ implies that no irreducible component of $\Supp B$ is contracted by $\sigma$ and no irreducible component of $\Supp (\sigma_*B)$ has negative self-intersection. So $(T, \sigma_*B)$ is canonical and $(T, (1-t)\sigma_*B+t\sigma_*D)$ is not klt.
Hence $T, \sigma_*B, \sigma_*D, t$ satisfy all assumptions of the theorem. 
So in order to give a lower bound of $t$, we may replace $S, B, D$ by $T, \sigma_*B, \sigma_*D$. 
By the classification of smooth weak del Pezzo surfaces (or standard MMP argument, see \cite[Lemma~3.1]{Jiang13} or \cite[Theorem~8.1.15]{Dol12}), every smooth weak del Pezzo surface has a birational morphism to $\bP^2$ or $\mathbb{F}_0$ or $\mathbb{F}_2$.
So without loss of generality, we may assume that $S=\bP^2$ or $\mathbb{F}_0$ or $\mathbb{F}_2$.

By assumption, $\Nklt(S,(1-t)B+tD)\neq \emptyset$.
Fix a closed point $P\in \Nklt(S,(1-t)B+tD)$. 
Then $t\geq \lct(S\ni P, (1-t)B; D)$. 
Set $I=(B\cdot D)_P$ and $m_B=\mult_P B$. Since $(S, B)$ is canonical, $m_B\leq 1$ by \cite[Theorem~4.5]{KM}. Thus $P\in \Supp D$ by Lemma~\ref{ls}.

If $m_B=0$, then $$t\geq \lct(S\ni P, (1-t)B; D)=\lct(S\ni P; D)\geq \frac{1}{4}$$ by \cite[Theorem~1.7]{CH08} and \cite[Corollary~5.2]{CS08}. In fact, if  $S=\bP^2$ or $\mathbb{F}_0$, then by \cite[Theorem~1.7]{CH08}, $\lct(S\ni P; D)\geq  \lct(S)\geq \frac{1}{3}$; if  $S=\mathbb{F}_2=\mathbb{P}_{\mathbb{P}^1}(\mathcal{O}\oplus \mathcal{O}(-2))$, then by \cite[Corollary~5.2]{CS08}, $\lct(S\ni P; D)\geq  \lct(S)\geq \frac{1}{4}$. Here $\lct(S)$ is defined to be $$\lct(S)=\sup\{t\in \bR \mid (S, tD') \text{ is lc for all effective } \mathbb{Q}\text{-divisor } D'\sim_\mathbb{Q}-K_S \}.$$

From now on, we suppose that $m_B>0$ and $I>0$, then
by 
Lemma~\ref{lct lower bound} and Lemma~\ref{Convexity for lct}\eqref{lct convex ineq},
\begin{align}
t\geq {}&\lct(S\ni P, (1-t)B; D)\notag\\
\geq {}& (1-t)m_B\lct\Big(S\ni P, \frac{1}{m_B} B; D\Big)+(1-(1-t)m_B)\lct(S\ni P, 0; D)\notag\\
\geq {}& (1-t)\frac{m_B^2}{I}+(1-(1-t)m_B)\lct(S\ni P; D).\label{ineq of t}
\end{align}

We will use the following 2 direct consequences of \eqref{ineq of t}.
\begin{claim}\label{claim I lct}
\begin{enumerate}
\item \label{claim I lct part 1} If $I\leq 9$ and $\lct(S\ni P; D)\geq \frac{2}{3}$, then $t\geq \frac{1}{4}.$
\item \label{claim I lct part 2} If $I\leq 8$ and $\lct(S\ni P; D)\geq \frac{1}{2}$, then $t\geq \frac{1}{5}.$
\end{enumerate}
\end{claim}
\begin{proof}
For \eqref{claim I lct part 1}, \eqref{ineq of t} implies that
\begin{align*}
t\geq \frac{(1-t)(m_B^2-6m_B)}{9}+\frac{2}{3}
 \geq \frac{-5(1-t)}{9}+\frac{2}{3},
\end{align*} where the second inequality follows from $m_B\leq 1$. Hence $t\geq \frac{1}{4}.$

Similarly, for \eqref{claim I lct part 2}, \eqref{ineq of t} implies that
\begin{align*}
 t\geq \frac{(1-t)(m_B^2-4m_B)}{8}+\frac{1}{2}
 \geq \frac{-3(1-t)}{8}+\frac{1}{2},
\end{align*} and hence $t\geq \frac{1}{5}.$
\end{proof}

Now we proceed to the proof of the theorem for $S=\bP^2$ or $\mathbb{F}_0$ or $\mathbb{F}_2$ case by case.

\medskip

{\bf Case 1}: $S=\mathbb{F}_{0}$.

\medskip

In this case, $I\leq (-K_S)^2=8$ and $\lct(S\ni P; D)\geq \frac{1}{2}$ by \cite[Theorem~1.7]{CH08}. Then by Claim~\ref{claim I lct}\eqref{claim I lct part 2}, $t\geq \frac{1}{5}.$

\medskip

{\bf Case 2}: $S=\bP^2$. 

\medskip

Write the prime decomposition $D=\sum_i d_iD_i$. As $\rho(S)=1$, for any $i$, $D_i\sim_{\bQ}c_i(-K_S)$ for some positive rational number $c_i$. Since $D\sim_{\bR}-K_S$, $\sum_i d_ic_i=1$. Then by Lemma~\ref{Convexity for lct}\eqref{lct convex not klt}, there exists an index $i$ such that $(S, (1-t)B+t\frac{D_i}{c_i})$ is not klt near $P$. So after replacing $D$ by $\frac{D_i}{c_i}$, we may assume that $\Supp D$ is irreducible.

If $\Supp D$ is a line $L$ on $S$, then $D=3L$. Then by \cite[Theorem~{1.11}]{HJL21},
\begin{align*}
3t\geq {}&\lct(S\ni P, (1-t)B; L)\\
\geq {}&\min\Big\{1, 1+\frac{m_B}{(B\cdot L)_P}-(1-t)m_B\Big\}\\
\geq {}&\min\Big\{1, 1-\frac{2m_B}{3}+tm_B\Big\}.
\end{align*}
Here the last inequality follows from $(B\cdot L)_P\leq (-K_S\cdot L)= 3$. Recall that $m_B\leq 1$, then by easy calculation, $t\geq \frac{1}{6}$. In fact, suppose that $t< \frac{1}{3}$, then $3t\geq 1-\frac{2m_B}{3}+tm_B$, which implies that $t\geq \frac{3-2m_B}{9-3m_B}\geq \frac{1}{6}$.

Now suppose that $\Supp D$ is not a line on $S$. Take $c=\lct(S\ni P; D)$ and denote $m_D=\mult_{P}D$. Then by Lemma~\ref{ls}, $cm_D\geq 1$.
Let $\pi : S'\to S$ be the blow-up of $S$ at $P$. 
Then
$$
K_{S'}+cD'+(cm_D-1)E=\pi^*(K_{S}+cD),
$$
where $D'$ is the strict transform of $D$ on $S'$ and $E$ is the exceptional divisor.
For any closed point $Q\in E$, take $F_Q$ to be the natural ruling of $S'\simeq \mathbb{F}_1$ passing $Q$, then by the assumption that $\Supp D$ is not a line, $F_Q\not \subset \Supp D'\cup E$. Hence $$((cD'+(cm_D-1)E)\cdot F_Q)_Q\leq ((cD'+(cm_D-1)E)\cdot F_Q)=3c-1.$$
If $c< \frac{2}{3}$, then by \cite[Corollary~5.57]{KM} or Lemma~\ref{ls}, $({S'}, cD'+(cm_D-1)E)$ is klt near any closed point $Q\in E$, which implies that $(S, cD)$ is klt near $P$, but it contradicts the definition of $c$.
So $\lct(S\ni P; D)=c\geq \frac{2}{3}$. As $I\leq (-K_S)^2=9$, by Claim~\ref{claim I lct}\eqref{claim I lct part 1}, $t\geq \frac{1}{4}.$

\medskip

{\bf Case 3}: $S=\mathbb{F}_{2}$.

\medskip

Denote by $F$ the natural ruling passing $P$ and $E_0$ the negative section of $\mathbb{F}_{2}$.
Recall that $(K_S\cdot F)=(E_0^2)=-2$, $(F^2)=(K_S\cdot E_0)=0$, $(F\cdot E_0)=1$.

First we reduce to the following 2 cases: 
\begin{enumerate}[label=(\roman*)]
 \item $D=4F+2E_0$, or
 \item $F\not \subset \Supp D$.
\end{enumerate}
Write $D=G+bF+eE_0$ where $F, E_0$ are not contained in $\Supp G$.
Note that $D\sim_\bR-K_S\sim_\bQ 4F+2E_0$. 
Therefore, 
$$
0\leq (G\cdot F)=((D-bF-eE_0)\cdot F)=2-e
$$
and
$$
0\leq (G\cdot E_0)=((D-bF-eE_0)\cdot E_0)=-b+2e,
$$
which imply that $2\geq e\geq \frac{b}{2}\geq 0$.
Denote $D_1=4F+2E_0$.
If $b=4$, then $D=D_1$.
If $b=0$, then $F\not \subset \Supp D$.
If $0<b<4$, denote $D_2=\frac{4}{4-b}(D-\frac{b}{4}D_1)$,
then $D_1,D_2$ are effective $\bR$-divisors with $D=\frac{b}{4}D_1+(1-\frac{b}{4})D_2$ and $D_1\sim_\bR D_2\sim_\bR -K_S$. 
Hence by Lemma~\ref{Convexity for lct}\eqref{lct convex not klt}, there exists an index $i=1$ or $2$ such that $(S, (1-t)B+tD_i)$ is not klt near $P$. So after replacing $D$ by $D_1$ or $D_2$, we may assume that either $D=D_1$, or $F\not \subset \Supp D$.

If $F\not \subset \Supp D$, then $(D\cdot F)_P\leq (D\cdot F)=2$. So \cite[Corollary~5.57]{KM} or Lemma~\ref{ls} implies that 
$\lct(S\ni P; D)\geq \frac{1}{2}$. As $I\leq (-K_S)^2=8$, by Claim~\ref{claim I lct}\eqref{claim I lct part 2}, $t\geq \frac{1}{5}.$

If $D=4F+2E_0$, then $E_0\not \subset \Supp B$ as $B$ and $D$ have no common irreducible component. On the other hand, $(B\cdot E_0)=(-K_S\cdot E_0)=0$. So $E_0\cap \Supp B=\emptyset.$ In particular, $P\not\in E_0$ and $P\in \Nklt(S,(1-t)B+4tF)$. 
 Then by \cite[Theorem~{1.11}]{HJL21},
\begin{align*}
4t\geq {}&\lct(S\ni P, (1-t)B; F)\\
\geq {}&\min\Big\{1, 1+\frac{m_B}{(B\cdot F)_P}-(1-t)m_B\Big\}\\
\geq {}&\min\Big\{1, 1-\frac{m_B}{2}+tm_B\Big\}.
\end{align*}
Here the last inequality follows from $(B\cdot F)_P\leq (-K_S\cdot F)= 2$. Recall that $m_B\leq 1$, then by easy calculation similar to Case 2, $t\geq \frac{1}{6}$.

Combining all the above cases, the proof is concluded.
\end{proof}

\section{Proof of the main theorem}\label{sec 4}
In this section, we prove Theorem~\ref{mainthm}.

\subsection{A reduction step}
First we reduce the problem to another birational model with more geometric structures.
The following proposition is an improvement of 
\cite[Theorem~4.1]{Jiang14} and \cite[Proposition~3.9]{CJ20} for canonical weak $\bQ$-Fano $3$-folds.

\begin{prop}\label{prop MFS}
Let $X$ be a canonical weak $\bQ$-Fano $3$-fold. 
Then $X$ is birational to a normal projective $3$-fold $Y$ satisfying the following:
\begin{enumerate}
 \item\label{prop MFS part1} $Y$ is $\mathbb{Q}$-factorial terminal; 
 \item\label{prop MFS part2} $(-K_X)^{3}\leq \Vol(Y, -K_{Y})$;
 \item\label{prop MFS part3} for any sufficiently large and divisible positive integer $n$, $|-nK_Y|$ is movable;
 \item\label{prop MFS part4} for a general member $M\in |-nK_Y|$, $M$ is irreducible and $(Y, \frac{1}{n}M)$ is canonical;
 \item\label{prop MFS part5} there exists a projective morphism $\pi: Y\to S$ with connected fibers where $F$ is a general fiber of $\pi$, such that one of the following conditions holds:
 \begin{enumerate}
 \item $S$ is a point and $Y$ is a $\bQ$-Fano $3$-fold with $\rho(Y)=1$;
 \item $S= \mathbb{P}^1$ and $F$ is a smooth weak del Pezzo surface;
 \item $S$ is a del Pezzo surface with at worst Du Val singularities and $\rho(S)=1$, and $F\simeq \mathbb{P}^1$.
 \end{enumerate}

\end{enumerate}
\end{prop}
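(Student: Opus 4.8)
The plan is to run the standard minimal model program reduction that produces a Mori fiber space, while carefully tracking the canonical singularities of $X$ so that the extra geometric restrictions in \eqref{prop MFS part4} and \eqref{prop MFS part5} come out. First I would take a small $\mathbb{Q}$-factorialization $X' \to X$ and then a terminalization $X'' \to X'$; since $X$ is canonical, $-K_{X''}$ remains nef and big and $X''$ is $\mathbb{Q}$-factorial terminal with $(-K_X)^3 = (-K_{X''})^3$. Next I would run a $(-K_{X''})$-MMP, or rather note that $-K_{X''}$ is already nef, so instead I run a $K_{X''}$-MMP which terminates with a Mori fiber space $\pi: Y \to S$; along the way each step is $K$-negative hence $(-K)$-does not decrease in volume, giving \eqref{prop MFS part2}, and $Y$ stays $\mathbb{Q}$-factorial terminal, giving \eqref{prop MFS part1}. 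Because $-K_Y$ is nef and big (it remains big as $X$ is weak Fano and the volume did not drop), $|-nK_Y|$ is semiample up to the anticanonical model, and in any case $-nK_Y$ is movable for $n$ large and divisible since $Y$ is of Fano type; this is \eqref{prop MFS part3}. For \eqref{prop MFS part4}, I would apply Bertini to the movable (hence base-point-free away from a small locus) linear system $|-nK_Y|$: a general member $M$ is irreducible, and since $Y$ is terminal and $\frac{1}{n}M \sim_{\mathbb{Q}} -K_Y$ with $M$ general, $(Y, \frac{1}{n}M)$ is canonical — here one uses that a general member of a movable system contributes no discrepancy drop below what terminality already guarantees, together with adjunction-type arguments.

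For part \eqref{prop MFS part5}, the three cases correspond to $\dim S = 0, 1, 2$. When $\dim S = 0$, $Y$ is a $\mathbb{Q}$-factorial terminal $\mathbb{Q}$-Fano with $\rho(Y) = 1$, which is case (a). When $\dim S = 1$, the base is a smooth curve; since $Y$ is rationally connected (being of Fano type), $S \cong \mathbb{P}^1$, and the general fiber $F$ is a surface with $-K_F$ nef and big, i.e. a weak del Pezzo; I must also argue $F$ is smooth, which follows from generic smoothness in characteristic zero. When $\dim S = 2$, the general fiber is a curve $F \cong \mathbb{P}^1$, and I need $S$ to be a del Pezzo surface with at worst Du Val singularities and $\rho(S) = 1$; the $\rho(S)=1$ is built into the Mori fiber space structure, and that $S$ has Du Val singularities and is del Pezzo is where the \emph{canonicity} of $X$ is essential rather than mere terminality — this requires analyzing how $K_Y$ and $-K_S$ relate via the conic bundle structure and using that discrepancies on $Y$ are controlled.

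The main obstacle I expect is precisely the refined control in \eqref{prop MFS part4} and the Du Val/del Pezzo assertion in case (c) of \eqref{prop MFS part5}: running the MMP is routine, but ensuring that the pair $(Y, \frac{1}{n}M)$ stays canonical (not just klt) for a general $M$, and that the surface base $S$ inherits only Du Val singularities, is where the hypothesis that $X$ was canonical (and the cited improvements of \cite[Theorem~4.1]{Jiang14} and \cite[Proposition~3.9]{CJ20}) does real work. Concretely, one has to compare the canonicity of $X$ with that of $X''$ and then propagate a bound on discrepancies through the $K$-MMP, and for the conic bundle case invoke the classification of such bundles over surfaces — I would quote the relevant boundedness/structure results rather than reprove them, and spend the bulk of the argument verifying that none of the MMP steps destroys the needed discrepancy inequalities.
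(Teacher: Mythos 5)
Your overall skeleton — take a $\mathbb{Q}$-factorial terminalization $W\to X$ (which you call $X''$), run a $K$-MMP to a Mori fiber space, then case on the base dimension — matches the paper, and parts \eqref{prop MFS part1}--\eqref{prop MFS part3} are fine. But there are two genuine gaps.

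For \eqref{prop MFS part4} you argue directly on $Y$: Bertini on the movable system $|-nK_Y|$ plus terminality of $Y$ is supposed to give canonicity of $(Y,\tfrac{1}{n}M)$. This does not follow. A movable system on a $3$-fold can have a fixed base curve $C$, and a general member $M\in|-nK_Y|$ may have multiplicity along $C$ growing proportionally to $n$, so that $\tfrac{1}{n}M$ has a bounded positive multiplicity along $C$ for all $n$; then terminality of $Y$ does not prevent $a(E,Y,\tfrac{1}{n}M)<0$ for some $E$ over $C$. The paper instead uses that $W$ is terminal with $-K_W$ nef and big, so by the base point free theorem $|-nK_W|$ is actually free; then $(W,\tfrac{1}{n}M_W)$ is canonical by Bertini, and the crepant relation $p^*(K_W+\tfrac{1}{n}M_W)=q^*(K_Y+\tfrac{1}{n}M)\sim_{\bQ}0$ (negativity lemma) transports this to $Y$. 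Your route cannot skip the detour through $W$.

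For \eqref{prop MFS part5}, case (c), you assert that $\rho(S)=1$ ``is built into the Mori fiber space structure.'' It is not: the Mori fiber space $Y\to T$ only gives $\rho(Y/T)=1$, and $\rho(T)$ can be arbitrary. The paper therefore runs a further $K$-MMP on $T$. This produces either a rank-one del Pezzo $T'$ (case (c)) or a map $T'\to\mathbb{P}^1$; in the latter subcase one must fall back to case (b) by composing $Y\to T\to T'\to\mathbb{P}^1$, and then the general fiber $F$ is only a \emph{weak} del Pezzo (since $-K_Y$ is relatively ample over $T$ but not over $\mathbb{P}^1$); one checks $-K_F$ is nef and big using that $-K_Y$ is big and $|-nK_Y|$ is movable. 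Your sketch misses this additional MMP on the base and the resulting extra subcase entirely, which is exactly where the two possible shapes of \eqref{prop MFS part5}(b) and (c) come from. (Also, in the genuine $\dim T=1$ MFS case the fiber is an honest del Pezzo, not merely a weak one; and the Du Val statement for $T$ is a consequence of terminality of $Y$ via Mori--Prokhorov, not of canonicity of $X$ as you suggest.)
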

\begin{proof}
Let $X$ be a canonical weak $\bQ$-Fano $3$-fold. By \cite[Theorem~6.23, Theorem~6.25]{KM}, we can take a $\bQ$-factorial terminalization of $X$, that is, there is a projective birational morphism $\phi: W \to X$ such that $K_{W}=\phi^*(K_X)$ and $W$ is $\mathbb{Q}$-factorial terminal.
In particular, $W$ is a $\mathbb{Q}$-factorial terminal weak $\bQ$-Fano $3$-fold.
As $-K_W$ is big, we can run a $K$-MMP on $W$ which ends up with a Mori fiber space $f: Y\to T$ (\cite[Corollary~1.3.2]{BCHM}), where 
$Y$ is $\bQ$-factorial terminal, $-K_Y$ is ample over $T$, $\dim Y>\dim T$, $f_*\OO_Y=\OO_T$, and $\rho(Y/T)=1$.

Take a common resolution $p:V\to W$, $q:V\to Y$. Then 
$$
p^*K_W=q^*K_Y+E_0
$$
where $E_0$ is an effective $\mathbb{Q}$-divisor on $V$ exceptional over $Y$.
Hence 
\begin{align*}
 \Vol(Y, -K_Y)={}&\Vol(V, -q^*K_Y)\\={}&\Vol(V, -p^*K_W+E_0)\\\geq {}& \Vol(V, -p^*K_W)=(-K_X)^3. 
\end{align*}
This concludes \eqref{prop MFS part1}\eqref{prop MFS part2}.

As $W$ is terminal and $-K_W$ is nef and big, $-K_W$ is semi-ample by the base point free theorem (\cite[Theorem~3.3]{KM}). 
So for any sufficiently large and divisible positive integer $n$, $|-nK_W|$ is base point free. Hence for a general member $M_W\in |-nK_W|$, $M_W$ is irreducible and $(W, \frac{1}{n}M_W)$ is canonical by the Bertini theorem (cf. \cite[Lemma~5.17]{KM}).
Denote $M$ to be the strict transform of $M_W$ on $Y$, which is a general member in $|-nK_Y|$, then 
$$
p^*\Big(K_W+\frac{1}{n}M_W\Big)=q^*\Big(K_Y+\frac{1}{n}M\Big)\sim_\bQ 0
$$
by the negativity lemma \cite[Lemma~3.39]{KM}. This implies that for any prime divisor $E$ over $Y$, $a(E, W, \frac{1}{n}M_W)=a(E, Y, \frac{1}{n}M)$. As $M_W$ is irreducible, it is not exceptional over $Y$. So for any prime divisor $E$ which is exceptional over $Y$, either $E$ is exceptional over $W$, or $E$ is a prime divisor on $W$ such that $E\neq M_W$. In either case, $a(E, Y, \frac{1}{n}M)=a(E, W, \frac{1}{n}M_W)\geq 0$. Hence $(Y, \frac{1}{n}M)$ is canonical. This concludes \eqref{prop MFS part3}\eqref{prop MFS part4}.

Finally, we show \eqref{prop MFS part5} by discussing the dimension of $T$. Note that $\dim T\in\{0,1,2\}$. Recall that by \cite{KMMT00} or \cite[Theorem~1]{ZQ06}, $W$ is rationally connected, which implies that $Y$ and $T$ are also rationally connected.

If $\dim T=0$, then take $S=T$ and $Y$ is a $\bQ$-Fano $3$-fold with $\rho(Y)=1$. In this case, we get (a).

If $\dim T=1$, then $T\simeq \mathbb{P}^1$. Take $S=\mathbb{P}^1$ and $\pi$ the induced morphism. Then the general fiber $F$ of $\pi: Y\to S$ is a smooth del Pezzo surface as $-K_Y$ is ample over $S$ and $Y$ is terminal (\cite[Corollary~5.18]{KM}). In this case, we get (b).

If $\dim T=2$, then $T$ is a rational surface as it is rationally connected, and $T$ has at worst Du Val singularities by \cite[Theorem~1.2.7]{MP}. 
We can run a $K$-MMP on $T$ which ends up with a surface $T'$, which is either a del Pezzo surface with at worst Du Val singularities and $\rho(T')=1$, or there is a morphism $T'\to \mathbb{P}^1$. 
In the former case, take $S=T'$ and $\pi: Y\to S$ to be the induced morphism $Y\to T\to T'$, then the general fiber of $\pi$ is a smooth rational curve as $-K_Y$ is ample over $T$. In this case, we get (c).
In the latter case, take $S=\mathbb{P}^1$ and $\pi: Y\to S$ to be the induced morphism $Y\to T\to T'\to \mathbb{P}^1$.
Recall that $-K_Y$ is big and $|-nK_Y|$ is movable, hence for a general fiber $F$ of $\pi$, $-K_F$ is big and $|-nK_F|$ is movable. This implies that $-K_F$ is nef and big as $\dim F=2$. Also $F$ is smooth as $Y$ is terminal (\cite[Corollary~5.18]{KM}). So $F$ is a smooth weak del Pezzo surface. In this case, we get (b).
\end{proof}
\subsection{Weak del Pezzo fibration case}

In this subsection, we treat the case in Proposition~\ref{prop MFS}(b).
\begin{prop}\label{prop dp fibration}
Keep the same notation as in Proposition~\ref{prop MFS}.
Suppose that $Y$ satisfies (b). Then $ \Vol(Y, -K_{Y})\leq 324.$
\end{prop}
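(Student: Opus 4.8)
The plan is to analyze the weak del Pezzo fibration $\pi\colon Y\to S=\mathbb{P}^1$, where the general fiber $F$ is a smooth weak del Pezzo surface. Following the strategy of \cite{Jiang14, Jiang18}, I would first note that $-K_Y|_F \sim_\bR -K_F$ and that the volume $\Vol(Y,-K_Y)$ can be controlled by intersecting $-K_Y$ with fibers. Write $d=(-K_F)^2=K_F^2\in\{1,\dots,9\}$ for the anti-canonical degree of the general fiber. The idea is that $(-K_Y)^3$ is bounded above by roughly $d\cdot(\text{something involving how many fibers the anti-canonical system ``sees''})$, and the key is to bound the horizontal contribution using log canonical thresholds on $F$. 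Concretely, for a general member $M\in|-nK_Y|$ with $(Y,\frac1nM)$ canonical (part \eqref{prop MFS part4} of Proposition~\ref{prop MFS}), restricting to $F$ gives $(F, \frac1n M|_F)$ canonical with $M|_F\sim_\bR -K_F \cdot(\text{multiplicity of }F\text{ in the generic fiber contribution})$; more precisely one extracts a divisor $B$ on $F$ with $B\sim_\bR -K_F$, $(F,B)$ canonical, and no component of $\Supp B$ having negative self-intersection (the last point coming from the terminality of $Y$ and the structure of the fibration, ensuring $-K_Y$ is relatively ample so the vertical/contracted curves do not appear in $B$).

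Next I would use the Shokurov--Koll\'ar connectedness lemma (Lemma~\ref{CL}) to produce a non-klt center. Take a point $x\in S$ away from the finitely many bad fibers, and a general fiber $F=\pi^{-1}(x)$. The point is to find an effective $\bR$-divisor $D$ on $F$ with $D\sim_\bR -K_F$ such that $(F,(1-t)B+tD)$ is not klt for a suitably small $t$ depending on $\Vol(Y,-K_Y)$ — the larger the volume, the smaller the $t$ one can achieve, because a large volume forces $-K_Y$ to be ``very positive'' along the fiber direction and hence, after passing to an appropriate multiple and using Lemma~\ref{lem mult>k nonklt} or a tie-breaking argument, one creates a non-klt singularity at a point of $F$ with small coefficient $t$. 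This is where the connectedness lemma enters: to guarantee the non-klt locus meets the general fiber rather than being swallowed into the special fibers, one uses that $-K_Y$ is nef and big so $\mathrm{Nklt}$ is connected, and a standard argument pushes the center onto a general fiber. Then Theorem~\ref{thm SLCT} applies to $F$, $B$, $D$, $t$ and yields $t\geq \frac16$ in general, and $t\geq\frac15$ when $F$ admits a birational morphism to $\mathbb{F}_0$. Combining the inequality $t\geq\frac16$ (or $\frac15$) with the volume-dependent upper bound $t\leq (\text{const})/\Vol(Y,-K_Y)^{?}$ gives the numerical bound $\Vol(Y,-K_Y)\leq 324$.

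I would organize the bookkeeping as follows: set $v=\Vol(Y,-K_Y)$, let $F$ be a general fiber with $d=(-K_F)^2$. One gets $v=(-K_Y)^3$ and, by considering the restriction exact sequence and asymptotic Riemann--Roch along $\pi$, an estimate like $h^0(Y,-nK_Y)\sim \frac{v}{6}n^3$ while $h^0(F,-nK_Y|_F)\sim\frac{d}{2}n^2$; feeding a divisor through a general fiber and using that $(Y,\frac1nM)$ is canonical, one bounds $(-K_Y|_F)\cdot(\text{vertical class})$. The explicit chain of inequalities should produce something of the shape $v\leq \frac{d}{t^{2}}$ or $v\le \frac{(\text{small integer})\cdot d}{t}$ with the worst case being $d=9$ (so $F\to\mathbb{F}_0$, giving $t\ge\frac15$) yielding $v\le 9\cdot 4\cdot 9 = 324$, or $d\le 8$ with $t\ge\frac16$ giving a smaller number; the claim $324$ then comes out of plugging in. I expect the main obstacle to be step two: rigorously producing the non-klt center on a \emph{general} fiber with the claimed small coefficient $t$, controlling that the divisor $D$ one constructs is genuinely $\bR$-linearly equivalent to $-K_F$ (not just numerically, and with no shared component with $B$, as required by hypothesis (5) of Theorem~\ref{thm SLCT}), and ensuring hypothesis (4) — no component of $\Supp B$ with negative self-intersection — which requires a careful use of the terminality of $Y$ and the relative ampleness of $-K_Y$ over $S$. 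The del Pezzo surface degree case analysis and the precise constant-chasing are routine once this geometric input is in place.
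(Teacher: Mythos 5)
Your plan is essentially the paper's own argument: restrict $-K_Y$ to a general fiber $F$, produce an auxiliary non-klt pair on $F$ via the Shokurov--Koll\'ar connectedness lemma, and then invoke Theorem~\ref{thm SLCT} to get a lower bound on the coefficient $t$, which translates into an upper bound on the volume. The mechanism for getting the non-klt locus to dominate $\mathbb{P}^1$ is exactly the ``two general fibers'' trick you gesture at: after writing $D\sim_{\mathbb{Q}}-K_Y-sF$ (which exists once $\Vol(Y,-K_Y)>3sK_F^2$ by the volume estimate of \cite[Lemma~2.5]{Jiang18}), one applies the connectedness lemma to $\bigl(Y,(1-\tfrac{2}{s})B_\epsilon+\tfrac{2}{s}D+F_1+F_2\bigr)$ with two general fibers $F_1,F_2$, so the connected Nklt locus must contain a horizontal component; inversion of adjunction then puts a non-klt pair on a general $F$ with $t=\tfrac{2}{s}$, and one lets $\epsilon\to 0$. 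This gives precisely the relation $v\leq 6d/t$ rather than the $d/t^2$ or vague ``$(\text{const})\cdot d/t$'' you wrote, and with $t\geq\tfrac16$ one gets $v\leq 36d\leq 36\cdot 9=324$.

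One correction to your bookkeeping: the worst case is \emph{not} ``$d=9$ so $F\to\mathbb{F}_0$ giving $t\geq\tfrac15$''. If $K_F^2=9$ then $F\simeq\mathbb{P}^2$, which lies in the $t\geq\tfrac16$ branch of Theorem~\ref{thm SLCT}; the $\mathbb{F}_0$ branch with the sharper bound $t\geq\tfrac15$ only occurs when $K_F^2\leq 8$ and is in fact not needed in the paper's proof of this proposition (it would only help you improve the bound in the $d\leq 8$ cases, which are already below $324$). The coincidence $9\cdot4\cdot 9=36\cdot 9=324$ masks this slip. You also correctly anticipate that the substantive work is verifying the hypotheses of Theorem~\ref{thm SLCT}: that $B=\tfrac1n M|_F$ is canonical with no negative-self-intersection components (this uses that $|{-nK_Y}|$ is movable and Bertini, so each component of $M|_F$ is nef, not the terminality of $Y$ per se), and that $D|_F$ shares no component with $B$ (this holds because $D$ is constructed independently of the general $M$). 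These are exactly the points the paper spells out.
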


\begin{proof}
 By assumption, a general fiber $F$ of $\pi: Y\to \mathbb{P}^1$ is a smooth weak del Pezzo surface. It is well-known that $K^2_F\leq 9$ (for example, it follows from the fact that $F$ has a birational morphism to one of $\mathbb{F}_0, \mathbb{F}_2, \mathbb{P}^2$ by \cite[Theorem~8.1.15]{Dol12}). 

To conclude the proof, we claim that $$\Vol(Y, -K_Y)\leq 36K_F^2.$$
Assume to the contrary that $\Vol(Y, -K_Y)> 36K_F^2,$
then we can find a rational number $s$ such that
$$\Vol(Y, -K_Y)>3sK_F^2>36K_F^2.$$
Here note that $s>12$.
Then by \cite[Lemma~2.5]{Jiang18},
\begin{align*}
 \Vol(Y, -K_Y-sF)\geq \Vol(Y, -K_Y)-3s\Vol(F, -K_F)>0.
\end{align*}
Hence there exists an effective $\bQ$-divisor $D\sim_\bQ-K_Y-sF$ on $Y$.

Fix a sufficiently large and divisible $n$ and take $M\in|-nK_Y|$ as in Proposition~\ref{prop MFS}\eqref{prop MFS part4}. Then $(Y, \frac{1}{n}M)$ is canonical and $K_Y+\frac{1}{n}M\sim_\bQ 0$.
Note that $-K_Y$ is big, so we may write $-K_Y=A+E$ where $A$ is an ample $\bQ$-divisor and $E$ is an effective $\bQ$-divisor on $Y$. Fix a sufficiently small rational number $\epsilon>0$, denote ${B_{\epsilon}}=\frac{1-\epsilon}{n}M+\epsilon E$. 
For two general fibers $F_1$ and $F_2$ of $\pi$, consider the pair
$$\Big(Y, \Big(1-\frac{2}{s}\Big){B_{\epsilon}} +\frac{2}{s}D+F_1+F_2\Big).$$
Note that
\begin{align*}
 {}&-\Big(K_Y+\Big(1-\frac{2}{s}\Big){B_{\epsilon}} +\frac{2}{s}D+F_1+F_2\Big)\\
 \sim_\bQ{}& -\Big(1-\frac{2}{s}\Big)(K_Y+{B_{\epsilon}})\sim_\bQ \Big(1-\frac{2}{s}\Big) \epsilon A
\end{align*}
is ample as $s>12$. Then by Lemma~\ref{CL}, $\Nklt(Y, (1-\frac{2}{s}){B_{\epsilon}} +\frac{2}{s}D+F_1+F_2)$ is connected. By construction, $F_1\cup F_2\subset \Nklt(Y, (1-\frac{2}{s}){B_{\epsilon}} +\frac{2}{s}D+F_1+F_2)$. 
Hence there is a non-klt center connecting $F_1$ and $F_2$. In particular, the above non-klt locus dominates $\bP^1$. Restricting on a general fiber $F$ of $\pi$, by the inversion of adjunction (see \cite[Lemma~5.50]{KM} or \cite[Theorem~2.11]{Jiang18}), 
$(F, (1-\frac{2}{s}){B_{\epsilon}}|_F+\frac{2}{s}D|_F)$ is not klt. As being klt is an open condition on coefficients, by the arbitrariness of $\epsilon$,
 it follows that $(F, (1-\frac{2}{s})\frac{1}{n}M|_F+\frac{2}{s}D|_F)$ is not klt for a very general fiber $F$ of $\pi$.

On the other hand, as $(Y, \frac{1}{n}M)$ is canonical, $(F, \frac{1}{n}M|_F)$ is canonical by the Bertini theorem (cf. \cite[Lemma~5.17]{KM}).
By the assumption on $M$, $M|_F$ is a general member of a movable linear system on $F$. So each irreducible component of $M|_F$ is nef, and $M|_F$ and $D|_F$ have no common irreducible component (recall that the construction of $D$ is independent of $M$). Also by construction, $\frac{1}{n}M|_F\sim_\bQ D|_F\sim_\bQ -K_F$. 
So we can apply Theorem~\ref{thm SLCT} to $F, \frac{1}{n}M|_F, D|_F$, which implies that $
\frac{2}{s}\geq \frac{1}{6}.$
But this contradicts the definition of $s$. Thus we conclude that
$\Vol(Y, -K_Y)\leq 36K_F^2\leq 324.$
\end{proof}

\subsection{Conic bundle case}

In this subsection, we treat the case in Proposition~\ref{prop MFS}(c).
\begin{prop}\label{prop conic bdl}
Keep the same notation as in Proposition~\ref{prop MFS}.
Suppose that $Y$ satisfies (c). Then $ \Vol(Y, -K_{Y})\leq 312.$
\end{prop}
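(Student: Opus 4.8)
The strategy mirrors that of Proposition~\ref{prop dp fibration}, but now the base $S$ is a surface, so instead of pulling back fibers of a fibration over $\mathbb{P}^1$ we must build the "horizontal" boundary from a suitable base-point-free linear system on $S$. First I would record the numerical constraints coming from (c): $F\simeq\mathbb{P}^1$ with $(-K_Y\cdot F)=2$, and since $\rho(S)=1$ and $S$ is a Du Val del Pezzo surface, $K_S^2\leq 9$. The key reduction is the claim $\Vol(Y,-K_Y)\leq \text{(some constant)}\cdot K_S^2$, and then $K_S^2\leq 9$ would finish it with the constant chosen to land at $312$ (note $312 < 36\cdot 9 = 324$, so the constant must be a bit smaller than $36$, presumably reflecting that a conic bundle contributes a factor $2$ from $(-K_Y\cdot F)$ somewhere, e.g.\ a bound like $\Vol(Y,-K_Y)\leq 8K_S^2 + \text{lower order}$ or a case analysis on $K_S^2$).

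\textbf{Main steps.} Assume for contradiction that $\Vol(Y,-K_Y)$ is too large. Pick a very ample (or base-point-free) divisor $H$ on $S$ with controlled $H^2$ and $(H\cdot(-K_S))$ — e.g.\ $-K_S$ itself or $-2K_S$ since $S$ is Du Val del Pezzo with $\rho=1$ — and let $L=\pi^*H$. Using \cite[Lemma~2.5]{Jiang18} (volume subadditivity) I would find a rational $s$ with $\Vol(Y,-K_Y) > (\text{coefficient involving } H)\cdot s$ forcing $s$ large, and produce an effective $\mathbb{Q}$-divisor $D\sim_\bQ -K_Y - s\pi^*H$. Then, exactly as before, write $-K_Y = A+E$ with $A$ ample, set $B_\epsilon = \frac{1-\epsilon}{n}M + \epsilon E$ for $M\in|-nK_Y|$ as in Proposition~\ref{prop MFS}\eqref{prop MFS part4}, and for two general members $H_1,H_2$ of $|H|$ consider the pair
\[
\Big(Y,\ \Big(1-\tfrac{2}{s}\Big)B_\epsilon + \tfrac{2}{s}D + \pi^*H_1 + \pi^*H_2\Big),
\]
whose anti-log-canonical divisor is $\sim_\bQ (1-\frac{2}{s})\epsilon A$, ample for $s$ large. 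By the connectedness lemma (Lemma~\ref{CL}) the non-klt locus is connected and contains $\pi^*H_1\cup\pi^*H_2$; since general $H_i$ are connected curves on $S$ whose preimages meet every fiber, this forces the non-klt locus to dominate $S$. Restricting to a general fiber $F\simeq\mathbb{P}^1$ via inversion of adjunction gives a non-klt pair on $\mathbb{P}^1$, i.e.\ a point of multiplicity $\geq 1$, and translating the divisor classes $\frac{1}{n}M|_F$, $D|_F$, $(\pi^*H_i)|_F$ back into intersection numbers on $F$ (using $(-K_Y\cdot F)=2$ and $(\pi^*H\cdot F)=0$) yields a numerical inequality of the shape $\frac{2}{s}\cdot(\text{something}) \geq 1$, contradicting $s$ large once the constant in the volume bound is chosen correctly.

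\textbf{Expected obstacle.} The delicate point — different from the del Pezzo fibration case — is that here $\pi^*H$ restricts to $0$ on a general fiber $F$, so the horizontal boundary does \emph{not} directly cut out a non-klt center on $F$; the argument must instead exploit that the non-klt center dominating $S$ meets $F$ in a point and use the \emph{vertical} behaviour, i.e.\ adjunction along $F$ combined with the fact that $-K_F$ has degree $2$ on $\mathbb{P}^1$ so that any effective $\mathbb{Q}$-divisor $\sim_\bQ -K_F$ has a point of multiplicity $>1$ only if its coefficient is pushed past $\frac12$. Making the bookkeeping precise — choosing $H$ (likely $-K_S$, using $(-K_S)^2 = K_S^2\leq 9$ and that $|-K_S|$ or $|-mK_S|$ is base-point-free on a Du Val del Pezzo surface of Picard rank $1$), tracking the coefficient of $F$ in the non-klt divisor, and verifying the Bertini/canonicity statements for the restriction $(F,\frac1nM|_F)$ — is where the real work lies, and getting the numerology to close at exactly $312$ rather than a weaker bound will require the sharpest available choice of $H$ and a possible split into subcases according to $K_S^2$.
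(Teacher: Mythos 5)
Your high-level template is right (subtract off a suitable pullback, apply volume subadditivity, produce $D\sim_\bQ -K_Y - tG$, run connectedness, restrict to a fiber), but the central step — getting the non-klt locus to dominate $S$ — is not achieved by the pair you write down, and this is precisely where the paper does something more delicate.

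The problem with $(Y,(1-\frac{2}{s})B_\epsilon+\frac{2}{s}D+\pi^*H_1+\pi^*H_2)$ is that $\pi^*H_1$ and $\pi^*H_2$ are \emph{already connected to each other}: two general members $H_1,H_2$ of a base-point-free linear system on a surface with $H^2>0$ intersect, and their preimages in $Y$ meet over $H_1\cap H_2$. So the connectedness lemma is satisfied trivially by $\pi^*H_1\cup\pi^*H_2$ itself and forces \emph{no} new non-klt center. (Your sentence ``their preimages meet every fiber'' is also false: $\pi^*H_i$ only meets the fibers over points of $H_i$, a curve.) This is exactly the point where the base being a surface rather than $\mathbb{P}^1$ breaks the del Pezzo-fibration argument, and you flag the obstacle but your proposed way around it does not work. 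The paper's fix is to choose a general point $z\in S$ and build $G_1=\sum_{j=1}^{J}\frac{2}{J}\pi^*(H_j)$ out of many general $H_j\in\mathcal H$ through $z$, each with tiny coefficient, so that $G_1\sim_\bQ 2G$, has multiplicity $\geq 2$ along the single fiber $F_z$ (making $F_z$ a non-klt center), and yet adds \emph{no} non-klt locus elsewhere by the perturbation lemma. They then also add $\pi^*C_0$ for a fixed curve $C_0$ with $z\notin C_0$. Now $F_z$ and $\pi^*C_0$ are two disjoint pieces of the non-klt locus, and connectedness genuinely forces a third piece connecting them; varying $z$ shows it dominates $S$. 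That extra divisor $C_0$ is why the final constant is $24(2+c)d$ rather than $24\cdot 2\cdot d$, and why there is a case analysis over the Du Val del Pezzo surfaces of Picard rank one to choose $\mathcal H$ and $C_0$.

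Two smaller numerological remarks. You guess the bound should look like ``$(\text{constant})\cdot K_S^2$ with $K_S^2\leq 9$,'' but $24(2+c)d$ is \emph{not} maximized at $K_S^2=9$: for $S=\mathbb{P}^2$ one gets $216$, for $\mathbb{P}(1,1,2)$ one gets $288$, and the maximum $312=24\cdot 13$ occurs at $K_S^2=6$ (where $d=K_S^2$, $c=1/d$), so the right bound is not monotone in $K_S^2$. Also, the intermediate bound is not on $\Vol(Y,-K_Y)$ in terms of $K_S^2$ directly but rather $\Vol(G,-K_Y|_G)\leq 4d$ for $G=\pi^*H$, which is what feeds into the subadditivity step; this is proved via adjunction and the fact that $\pi|_G$ factors through a ruled surface over $H$, a computation your sketch does not include.
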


\begin{proof}
Recall that by the classification of del Pezzo surfaces with Du Val singularities and Picard rank $1$ (see \cite{MZ88}, \cite[Remark~3.4(ii)]{Prok07}), there are 3 cases:
\begin{enumerate}
 \item $K_{S}^2=9$ and $S\simeq \mathbb{P}^2$;
 \item $K_{S}^2=8$ and $S\simeq \mathbb{P}(1, 1, 2)$;
 \item $1\leq K_{S}^2\leq 6$ and there exists a rational curve $C$ on $S$ such that $(K_S\cdot C)=-1$.
\end{enumerate}
Here as $\rho(S)=1$, $(K_S^2)\cdot C\sim_\bQ -K_S$ in Case (3).
Consider the linear system $\mathcal{H}$ on $S$ defined by
$$\mathcal{H}=
\begin{cases}
|\mathcal{O}_{\mathbb{P}^2}(1)| & \text{if } S\simeq \mathbb{P}^2;\\
|\mathcal{O}_{\mathbb{P}(1, 1, 2)}(2)| & \text{if } S\simeq \mathbb{P}(1, 1, 2);\\
|-K_{S}|& \text{if } 2\leq K_{S}^2\leq 6;\\
|-2K_{S}|& \text{if } K_{S}^2=1.
\end{cases}
$$
and fix an effective Weil divisor $C_0$ on $S$ defined by
$$C_0=
\begin{cases}
\text{a general element of } \mathcal{H} & \text{if } S\simeq \mathbb{P}^2 \text{ or }\mathbb{P}(1, 1, 2);\\
C& \text{if } 1\leq K_{S}^2\leq 6.
\end{cases}
$$

We claim that $\mathcal{H}$ is base point free and defines a generically finite map. The first two cases are easy facts. For the rest cases, consider $f: \tilde{S}\to S$ to be the minimal resolution of $S$, then $K_{\tilde{S}}=f^*K_{S}$ and $\tilde{S}$ is a smooth weak del Pezzo surface. Then the claim is equivalent to $f^*\mathcal{H}$ being base point free, which follows from the classification of smooth weak del Pezzo surfaces (see, for example, \cite[Theorem~8.3.2]{Dol12}).

Take a general element $H\in \mathcal{H}$ and denote $G=\pi^{-1}(H)=\pi^*H$. By the Bertini theorem, $G$ and $H$ are smooth.
Note that for a general fiber $F$ of $\pi|_G$, $F\simeq \mathbb{P}^1$, $(-K_G\cdot F)=2$, and $G|_G\sim (H^2)\cdot F$.

Define $d=(-K_S\cdot H)$ and define the number $c$ by the relation $C_0\sim_\bQ cH$. Then it is easy to compute that
$$d=
\begin{cases}
3 & \text{if } S\simeq \mathbb{P}^2;\\
4 & \text{if } S\simeq \mathbb{P}(1, 1, 2);\\
K_S^2 & \text{if } 2\leq K_{S}^2\leq 6;\\
2 & \text{if } K_{S}^2=1;
\end{cases}
$$
and
$$c=
\begin{cases}
1 & \text{if } S\simeq \mathbb{P}^2 \text{ or } \mathbb{P}(1, 1, 2);\\
\frac{1}{d} & \text{if } 1\leq K_{S}^2\leq 6.
\end{cases}
$$

First, we show that $\Vol(G, -K_{Y}|_G)\leq 4d$.
Note that $\pi|_G$ is factored through by a ruled surface over $H$, so $K_G^2\leq 8-8g(H)$.
Also note that by Proposition~\ref{prop MFS}\eqref{prop MFS part3}, $-K_Y|_G$ is nef. So
\begin{align*}
{}&\Vol(G, -K_{Y}|_G)=(-K_Y|_G)^2=(-K_{G}+G|_G)^2\\={}&K_{G}^2+4H^2\leq 8-8g(H)+4H^2=-4(K_S\cdot H)=4d.
\end{align*}
Here we used the adjunction formula $(K_S\cdot H)+H^2=2g(H)-2$. 

Next, we claim that $\Vol(Y, -K_{Y})\leq 24(2+c)d$.
Assume to the contrary that $\Vol(Y, -K_{Y})>24(2+c)d$.
Take a rational number $t$ such that
$$\Vol(Y, -K_Y)>12td>24(2+c)d.$$
Here note that $t>2(2+c)$. 
Then by \cite[Lemma~2.5]{Jiang18},
\begin{align*}
 \Vol(Y, -K_Y-tG)\geq \Vol(Y, -K_Y)-3t\Vol(G, -K_Y|_G)>0.
\end{align*}
Hence there exists an effective $\bQ$-divisor $D\sim_\bQ-K_Y-tG$ on $Y$.

Fix a sufficiently large and divisible $n$ and take $M\in |-nK_Y|$ as in Proposition~\ref{prop MFS}\eqref{prop MFS part4}, then $(Y, \frac{1}{n}M)$ is canonical and $K_Y+\frac{1}{n}M\sim_\bQ 0$.
Note that $-K_Y$ is big, so we may write $-K_Y=A+E$ where $A$ is an ample $\bQ$-divisor and $E$ is an effective $\bQ$-divisor on $Y$. Fix a sufficiently small rational number $\epsilon>0$, denote ${B_{\epsilon}}=\frac{1-\epsilon}{n}M+\epsilon E$. 
Since $\mathcal{H}$ is base point free, for a general point $z\in S\setminus C_0$, there exists a rational number $\eta>0$ (cf. \cite[4.8]{SOP}) such that for any general $H\in \mathcal{H}$ containing $z$,
\begin{align}
 {}& \Nklt\Big(Y, \Big(1-\frac{2+c}{t}\Big){B_{\epsilon}}+\frac{2+c}{t}D+\eta \pi^*(H)\Big)\notag\\
 ={}& \Nklt\Big(Y, \Big(1-\frac{2+c}{t}\Big){B_{\epsilon}}+\frac{2+c}{t}D\Big)\label{nklt eta H}.
\end{align}
Fix an integer $J>\frac{2}{\eta}$. As $\mathcal{H}$ defines a generically finite map, we may take distinct general elements $H_j\in \mathcal{H}$ containing $z$ for $1\leq j\leq J$ and take $G_1=\sum_{j=1}^J\frac{2}{J}\pi^*(H_j)$.
Denote  $F_z$ to be the fiber of $Y$ over $z$. Then $\mult_{F_z}G_1\geq 2$ and $G_1\sim_\bQ 2\pi^*(H)= 2G$. In particular, $F_z$ is a non-klt center of $(Y, G_1)$ by Lemma~\ref{lem mult>k nonklt}. So in a neighborhood of $F_z$,
\begin{align}
{}&\Nklt\Big(Y, \Big(1-\frac{2+c}{t}\Big){B_{\epsilon}}+\frac{2+c}{t}D+G_1+\pi^*C_0\Big)\notag\\
={}&\Nklt\Big(Y, \Big(1-\frac{2+c}{t}\Big){B_{\epsilon}}+\frac{2+c}{t}D+G_1)\notag\\
={}&\Nklt\Big(Y, \Big(1-\frac{2+c}{t}\Big){B_{\epsilon}}+\frac{2+c}{t}D\Big)\cup F_z,\label{nklt Fz}
\end{align}
here the first equality is by $z\not\in C_0$, and the second equality is by \eqref{nklt eta H} as they are the same away from $F_z$.
Now consider the pair $(Y, (1-\frac{2+c}{t}){B_{\epsilon}}+\frac{2+c}{t}D+G_1+\pi^*C_0)$. Recall that by definition, $\pi^*C_0\sim_\bQ c\pi^*(H)= cG$.
Then
\begin{align*}
{}&-\Big(K_{Y}+\Big(1-\frac{2+c}{t}\Big){B_{\epsilon}}+\frac{2+c}{t}D+G_1+\pi^*C_0\Big)\\\sim_\bQ{}&
-\Big(1-\frac{2+c}{t}\Big)(K_Y+{B_{\epsilon}})\sim_\bQ \Big(1-\frac{2+c}{t}\Big)\epsilon A
\end{align*}
is ample as $t>2(2+c)$. So by Lemma~\ref{CL},
$$\Nklt\Big(Y, \Big(1-\frac{2+c}{t}\Big){B_{\epsilon}}+\frac{2+c}{t}D+G_1+\pi^*C_0\Big)
$$
is connected. By construction, it
contains $F_z$ and prime divisors in $\Supp(\pi^*C_0)$ that dominate $C_0$. 
As $z\not \in C_0$, $F_z$ is disjoint from $\Supp(\pi^*C_0)$, so 
by \eqref{nklt Fz} and the connectedness, 
$\Nklt(Y, (1-\frac{2+c}{t}){B_{\epsilon}}+\frac{2+c}{t}D)$ is non-empty and intersects $F_z$.
As $z$ is general, this implies that
$\Nklt(Y, (1-\frac{2+c}{t}){B_{\epsilon}}+\frac{2+c}{t}D)$ dominates $S$.
 By applying the inversion of adjunction twice (see \cite[Lemma~5.50]{KM} or \cite[Theorem~2.11]{Jiang18}),
$(F, (1-\frac{2+c}{t}){B_{\epsilon}}|_{F}+\frac{2+c}{t}D|_{F})$ is not klt for a general fiber ${F}$  of $\pi$. 
As being klt is an open condition on coefficients, by the arbitrariness of $\epsilon$,
 it follows that $(F, (1-\frac{2+c}{t})\frac{1}{n}M|_{F}+\frac{2+c}{t}D|_{F})$ is not klt for a very general fiber ${F}$  of $\pi$. 

So there exists a closed point $P\in {F}$ such that 
\begin{align}
 \mult_P\Big(\Big(1-\frac{2+c}{t}\Big)\frac{1}{n}M|_{F}+\frac{2+c}{t}D|_{F}\Big)\geq 1.\label{mult p geq 1}
\end{align}
Note that ${F}\simeq \bP^1$. Hence $\mult_P (D|_{F})\leq\deg (D|_{F})=\deg(-K_{\bP^1})=2$. 
As $M$ is irreducible and $F$ is general, we may assume that $M\to S$ is \'etale over $\pi(F)$, and in particular, $\mult_P (M|_{F})\leq 1$. 
So \eqref{mult p geq 1} implies that
\begin{align*}
 \Big(1-\frac{2+c}{t}\Big)\frac{1}{n}+\frac{2(2+c)}{t}\geq 1.
\end{align*}
As we can choose $n$ arbitrarily large, the above inequality implies that $t\leq 2(2+c)$. But this contradicts the definition of $t$. So we conclude that $\Vol(Y, -K_{Y})\leq 24(2+c)d\leq 312$. 
\end{proof}

\subsection{Proofs of Theorem~\ref{mainthm} and Corollary~\ref{maincor}}
\begin{proof}[Proof of Theorem~\ref{mainthm}]
To prove Theorem~\ref{mainthm}, it suffices to show that for $Y$ in Proposition~\ref{prop MFS}, $\Vol(Y, -K_Y)\leq 324.$
If $Y$ satisfies (a) in Proposition~\ref{prop MFS}, then 
${\rm Vol}(-K_Y)=(-K_Y)^3\leq 64$ by \cite{Nami} and \cite[Theorem~1.2]{Prok07}.
If $Y$ satisfies (b) or (c) in Proposition~\ref{prop MFS}, then 
${\rm Vol}(-K_Y)\leq 324$ by Proposition~\ref{prop dp fibration} and Proposition~\ref{prop conic bdl}.
\end{proof}

\begin{proof}[Proof of Corollary~\ref{maincor}]
Let $X$ be a canonical weak $\bQ$-Fano $3$-fold. By \cite[Theorem~6.23, Theorem~6.25]{KM}, we can take a $\bQ$-factorial terminalization of $X$, that is, there is a projective birational morphism $\phi: W \to X$ such that $K_{W}=\phi^*(K_X)$ and $W$ is $\mathbb{Q}$-factorial terminal.
In particular, $(-K_X)^3=(-K_W)^3$ and $h^0(X, -K_X)=h^0(W, -K_W)$.

By Reid's Riemann--Roch formula, 
$$
h^0(W, -K_W)=\frac{1}{2}(-K_W)^3+\frac{1}{2}\sum_i \Big(\frac{b_i^2}{r_i}-b_i\Big)+3,
$$
where $\{(b_i, r_i)\mid 0<b_i\leq r_i/2\}$ runs over Reid’s basket of orbifold points of $W$ (see \cite[2.3]{Prok07} or \cite[2.2]{CJ16}). In particular, 
$
h^0(W, -K_W)\leq \frac{1}{2}(-K_W)^3+3,
$
and the equality holds if and only if $W$ is Gorenstein, or equivalently, $K_W$ is Cartier (cf. \cite[(2.6)]{Prok07}).

If $W$ is Gorenstein, then $(-K_W)^3\leq 72$ by \cite{Prok05} (see \cite[Theorem~1.5]{Prok05} and \cite[paragraph after Definition~1.10]{Prok05}), and hence $
h^0(W, -K_W)\leq 39$. If $W$ is not Gorenstein, then by Theorem~\ref{mainthm}, $
h^0(W, -K_W)< 165$.
\end{proof}

\section*{Funding} This work was supported by National Key Research and Development Program of China (Grant No.~2020YFA0713200) and National Natural Science Foundation of China for Innovative Research Groups (Grant No. 12121001).
\section*{Acknowledgments} The authors are grateful to Meng Chen, Jingjun Han, and Yujie Luo for discussions and suggestions. The second author would like to thank her advisor, Professor Meng Chen, for his support and encouragement. Part of this paper was written during the second author enjoying "The First National Number Theory and Algebraic Geometry Doctoral Forum" at the University of Science and Technology of China and she is grateful for the hospitality and support of USTC. We thank the referees for useful comments and suggestions.

\end{document}